\documentclass[12pt]{amsart}

\newtheorem{theorem}{Theorem}
\newtheorem{lemma}{Lemma}
\newtheorem{prop}{Proposition}
\newtheorem{cor}{Corollary}

\theoremstyle{definition}

\theoremstyle{remark}
\newtheorem{remark}{Remark}
\numberwithin{equation}{section}

\usepackage{amsmath,amssymb}

\def\re{\mathbb{R}}
\def\N{\mathbb{N}}

\def\({\left(}
\def\){\right)}

\def\eps{\varepsilon}
\def\la{\lambda}

\def\F{\mathcal{F}}

\def\S{\mathcal{S}}

\def\intO{\int_{\Omega}}

\def\weakto{\rightharpoonup}
\def\Pha{\Phi_{\alpha}}
\def\Psa{\Psi_{\alpha}}

\begin{document}
\title[]{Critical and subcritical fractional Trudinger-Moser type inequalities on $\mathbb{R}$}

\author{Futoshi Takahashi}
\address{Department of Mathematics, Graduate School of Science, Osaka City University, Sumiyoshi-ku, Osaka, 558-8585, Japan \\ 
Department of Mathematics, Osaka City University \& 
OCAMI, Sumiyoshi-ku, Osaka, 558-8585, Japan}
\email{futoshi@sci.osaka-cu.ac.jp}

\subjclass[2010]{Primary 35A23; Secondary 26D10.}

\keywords{Trudinger-Moser inequality, fractional Sobolev spaces, maximizing problem.}
\date{\today}

\dedicatory{}

\begin{abstract}
In this paper, we are concerned with the critical and subcritical Trudinger-Moser type inequalities 
for functions in a fractional Sobolev space $H^{1/2,2}$ on the whole real line. 
We prove the relation between two inequalities and discuss the attainability of the suprema.
\end{abstract}

\maketitle

%
%
\section{Introduction}

Let $\Omega \subset \re^N$, $N \ge 2$ be a domain with finite volume.
Then the Sobolev embedding theorem assures that $W^{1,N}_0(\Omega) \hookrightarrow L^q(\Omega)$ for any $q \in [1, +\infty)$,
however, a simple example shows that the embedding $W^{1,N}_0(\Omega) \hookrightarrow L^{\infty}(\Omega)$ does not hold.
Instead, functions in $W^{1,N}_0(\Omega)$ enjoy the exponential summability:
\[
	W^{1,N}_0(\Omega) \hookrightarrow \{ u \in L^N(\Omega) \, : \, \intO \exp \(\alpha |u|^{\frac{N}{N-1}} \) dx < \infty \quad \text{for any} \, \alpha > 0 \},
\]
see Yudovich \cite{Yudovich}, Pohozaev \cite{Pohozaev}, and Trudinger \cite{Trudinger}.
Later, Moser \cite{Moser} improved the embedding above as follows, now known as the Trudinger-Moser inequality:
\begin{align*}
	TM(\Omega, \alpha) = \sup_{u \in W^{1,N}_0(\Omega) \atop \| \nabla u \|_{L^N(\Omega)} \le 1} \frac{1}{|\Omega|} \intO \exp (\alpha |u|^{\frac{N}{N-1}}) dx
	\begin{cases}
	&< \infty, \quad \alpha \le \alpha_N, \\
	&= \infty, \quad \alpha > \alpha_N,
	\end{cases}
\end{align*}
here $\alpha_N = N \omega_{N-1}^{\frac{1}{N-1}}$ and $\omega_{N-1} = |S^{N-1}|$ denotes the area of the unit sphere in $\re^N$. 
On the attainability of $TM(\Omega, \alpha)$, Carleson-Chang \cite{Carleson-Chang}, Flucher \cite{Flucher}, and Lin \cite{KCLin} proved that 
$TM(\Omega, \alpha)$ is attained for any $0 < \alpha \le \alpha_N$.

On domains with infinite volume, for example on the whole space $\re^N$, the Trudinger-Moser inequality does not hold as it is.
However, several variants are known on the whole space.
In the following, let
\[
	\Phi_N(t) = e^t - \sum_{j=0}^{N-2} \frac{t^j}{j!} 
\]
denote the truncated exponential function.

First,
Ogawa \cite{Ogawa}, Ogawa-Ozawa \cite{Ogawa-Ozawa}, Cao \cite{Cao}, Ozawa \cite{Ozawa(JFA)}, and Adachi-Tanaka \cite{Adachi-Tanaka}
proved that the following inequality holds true, which we call Adachi-Tanaka type Trudinger-Moser inequality:
\begin{align*}
	A(N, \alpha) = \sup_{u \in W^{1,N}(\re^N) \setminus \{ 0 \} \atop \| \nabla u \|_{L^N(\re^N)} \le 1} \frac{1}{\| u \|^N_{L^N(\re^N)}} \int_{\re^N} \Phi_N (\alpha |u|^{\frac{N}{N-1}}) dx
	\begin{cases}
	&< \infty, \quad \alpha \, < \, \alpha_N, \\
	&= \infty, \quad \alpha \ge \alpha_N.
	\end{cases}
\end{align*}
The inequality enjoys the scale invariance under the scaling $u(x) \mapsto u_{\la}(x) = u(\la x)$ for $\la > 0$.
Note that the critical exponent $\alpha = \alpha_N$ is not allowed for the finiteness of the supremum.
Recently, it is proved that $A(N, \alpha)$ is attained for any $\alpha \in (0, \alpha_N)$ by 
Ishiwata-Nakamura-Wadade \cite{Ishiwata-Nakamura-Wadade} and Dong-Lu \cite{Dong-Lu}.
In this sense, Adachi-Tanaka type Trudinger-Moser inequality has a subcritical nature of the problem.

On the other hand, Ruf \cite{Ruf} and Li-Ruf \cite{Li-Ruf} proved that the following inequality holds true:
\begin{align*}
	B(N, \alpha) = \sup_{u \in W^{1,N}(\re^N) \atop \| u \|_{W^{1,N}(\re^N)} \le 1} \int_{\re^N} \Phi_N (\alpha |u|^{\frac{N}{N-1}}) dx
	\begin{cases}
	&< \infty, \quad \alpha \, \le \, \alpha_N, \\
	&= \infty, \quad \alpha > \alpha_N.
	\end{cases}
\end{align*}
Here $\| u \|_{W^{1,N}(\re^N)} = \( \| \nabla u \|_{L^N(\re^N)}^N + \| u \|_{L^N(\re^N)}^N \)^{1/N}$ is the full Sobolev norm.
Note that the scale invariance $(u \mapsto u_{\la})$ does not hold for this inequality.
Also note that the critical exponent $\alpha = \alpha_N$ is permitted to the finiteness. 

Concerning the attainability of $B(N, \alpha)$, the following facts have been proved:
\begin{itemize}
\item If $N \ge 3$, then $B(N, \alpha)$ is attained for $0 < \alpha \le \alpha_N$ \cite{Ruf}.
\item If $N = 2$, then  there exists $\alpha_* > 0$  such that $B(2, \alpha)$ is attained for $\alpha_* < \alpha \le \alpha_2 (= 4\pi)$ \cite{Ruf}, \cite{Ishiwata}.
\item If $N = 2$ and $\alpha >0$ is sufficiently small, then $B(2,\alpha)$ is not attained. \cite{Ishiwata}.
\end{itemize}
The non-attainability of $B(2,\alpha)$ for $\alpha$ sufficiently small is attributed to the non-compactness of ``vanishing" maximizing sequences, as described in \cite{Ishiwata}.

%
%
%
%
%
%
%
%
%

In the following, we focus our attention on the fractional Sobolev spaces.

Let $s \in (0,1)$, $p \in [1, +\infty)$ and let $\Omega \subset \re^N$ be a bounded Lipschitz domain.
For $s > 0$, let us consider the space
\[
	L_s(\re^N) = \left\{ u \in L^1_{loc}(\re^N) \, : \, \int_{\re^N} \frac{|u|}{1+|x|^{N+s}} dx < \infty \right\}.
\]
For $u \in L_s(\re^N)$, we define the fractional Laplacian $(-\Delta)^{s/2} u$ as follows:
First, for $\phi \in \S(\re^N)$, the rapidly decreasing functions on $\re^N$, 
$(-\Delta)^{s/2} \phi$ is defined via the normalized Fourier transform $\F$ as $(-\Delta)^{s/2} \phi (x) = \F^{-1}(|\xi|^s \F \phi (\xi))(x)$ for $x \in \re^N$. 
Then for $u \in L_s(\re^N)$, $(-\Delta)^{s/2} u$ is defined as the element of $\S^{\prime}(\re^N)$, the tempered distributions on $\re^N$, by the relation
\[
	\langle  \phi, (-\Delta)^{s/2} u \rangle = \langle (-\Delta)^{s/2} \phi, u \rangle = \int_{\re} (-\Delta)^{s/2} \phi \cdot u dx, \quad \phi \in \S(\re^N).
\]
Note that $L^p(\re^N) \subset L_s(\re^N)$ for any $p \ge 1$.
Also note that it could happen $supp((-\Delta)^{s/2} u) \not\subset \Omega$ even if $supp(u) \subset \Omega$ for some open set $\Omega$ in $\re^N$.

By using the above notion, we define {\it the Bessel potential space} $H^{s,p}(\Omega)$ for a (possibly unbounded) set $\Omega \subset \re^N$ as
\begin{align*}
	H^{s,p}(\re^N) &= \left\{ u \in L^p(\re^N) \, : \, (-\Delta)^{s/2} u \in L^p(\re^N) \right\}, \\
	\tilde{H}^{s,p}(\Omega) &= \left\{ u \in H^{s,p}(\re^N) \, : u \equiv 0 \quad \text{on} \, \re^N \setminus \Omega  \right\}.
\end{align*}

On the other hand, the Sobolev-Slobodeckij space $W^{s,p}(\re^N)$ is defined as
\begin{align*}
	W^{s,p}(\re^N) &= \left\{ u \in L^p(\re^N) \, : \, [u]_{W^{s,p}(\re^N)} < \infty \right\}, \\
	[u]_{W^{s,p}(\re^N)}^p &= \int_{\re^N} \int_{\re^N} \frac{|u(x)-u(y)|^p}{|x-y|^{N + sp}} dxdy,
\end{align*}
and for a bounded domain $\Omega \subset \re^N$, we define
\[
	\tilde{W}^{s,p}(\Omega) = \overline{C_c^{\infty}(\Omega)}^{\| \cdot \|_{W^{s,p}(\re^N)}}
\]
where $\| u \|_{W^{s,p}(\re^N)} = \( \| u \|_{L^p(\re^N)}^p + [u]_{W^{s,p}(\re^N)}^p \)^{1/p}$.
It is known that
\[
	\tilde{W}^{s,p}(\Omega) = \left\{ u \in W^{s,p}(\re^N) \, : u \equiv 0 \quad \text{on} \, \re^N \setminus \Omega  \right\}
\]
if $\Omega$ is a Lipschitz domain and 
$H^{s,p}(\re^N) = F^s_{p,2}(\re^N)$ (Triebel-Lizorkin space),
$W^{s,p}(\re^N) = B^s_{p,p}(\re^N)$ (Besov space).
Thus $H^{s,2}(\re^N) = W^{s,2}(\re^N)$, however in general, $H^{s,p}(\re^N) \ne W^{s,p}(\re^N)$ for $p \ne 2$.
See \cite{Parini-Ruf}, \cite{Iula} and the references therein.
%
%

Recently, Martinazzi \cite{Martinazzi} (see also \cite{Iula-Maalaoui-Martinazzi}) proved a fractional Trudinger-Moser type inequality on $\tilde{H}^{s,p}(\Omega)$ as follows:
Let $p \in (1, \infty)$ and $s = N/p$ for $N \in \N$.
Then for any open $\Omega \subset \re^N$ with $|\Omega| < \infty$, it holds
\begin{align*}
	\sup_{u \in \tilde{H}^{s,p}(\Omega) \atop \| (-\Delta)^{s/2} u \|_{L^p(\Omega)} \le 1} \frac{1}{|\Omega|} \intO \exp (\alpha |u|^{\frac{p}{p-1}}) dx
	\begin{cases}
	&< \infty, \quad \alpha \le \alpha_{N,p}, \\
	&= \infty, \quad \alpha > \alpha_{N,p}.
	\end{cases}
\end{align*}
Here $\alpha_{N,p} = \frac{N}{\omega_{N-1}} \( \frac{\Gamma((N-s)/2)}{\Gamma(s/2) 2^s \pi^{N/2}} \)^{-p/(p-1)}$.

We note that, differently from the classical case, the attainability of the supremum is not known even for $N = 1$ and $p=2$.

On the Sobolev-Slobodeckij spaces $\tilde{W}^{s,p}(\Omega)$ with $sp = N$, 
similar fractional Trudinger-Moser inequality is also proved by Parini-Ruf \cite{Parini-Ruf} when $N \ge 2$ and Iula \cite{Iula} when $N = 1$.
In this case, the result is weaker and the inequality holds true only for $0 \le \alpha < \alpha^{*}_{N,p}$ for some (explicit) value $\alpha^{*}_{N,p}$.
Also, it is not known the inequality holds or not when $\alpha = \alpha^{*}_{N,p}$. 

%
%

In the following, we are interested in the simplest one dimensional case, that is,
we put $N = 1$, $s = 1/2$ and $p = 2$.  
In this case, the Bessel potential space $H^{1/2,2}(\re)$ coincides with the Sobolev-Slobodeckij space $W^{1/2,2}(\re)$ and
both seminorms are related as
\begin{align*}
	 \| (-\Delta)^{1/4} u \|_{L^2(\re)}^2 = \frac{1}{2\pi} [u]_{W^{1/2,2}(\re)}^2,
\end{align*}
see Proposition 3.6. in \cite{Hitchhiker's guide}. 
Then the fractional Trudinger-Moser inequality in \cite{Martinazzi}, \cite{Iula-Maalaoui-Martinazzi} can be read as
\begin{prop}{(A fractional Trudinger-Moser inequality on $\tilde{H}^{1/2,2}(I)$)}
\label{Prop:fractional TM}
Let $I \subset \re$ be an open bounded interval. Then it holds
\begin{align*}
	\sup_{u \in \tilde{H}^{1/2,2}(I) \atop \| (-\Delta)^{1/4} u \|_{L^2(I)} \le 1} \frac{1}{|I|} \int_I e^{\alpha |u|^2} dx
	\begin{cases}
	&< \infty, \quad \alpha \le \alpha_{1,2} = \pi, \\
	&= \infty, \quad \alpha > \pi
	\end{cases}
\end{align*}
\end{prop}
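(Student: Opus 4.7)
The plan is to deduce the statement directly from the general fractional Trudinger--Moser inequality of Martinazzi \cite{Martinazzi} (see also \cite{Iula-Maalaoui-Martinazzi}) recalled immediately before the proposition, specialised to $N=1$, $s=1/2$, $p=2$. In this regime one has $sp=N=1$, so the hypothesis of that theorem is met, and the finiteness for $\alpha\le\alpha_{1,2}$ together with the divergence for $\alpha>\alpha_{1,2}$ follows at once. Thus essentially the only computation required is to verify that the general sharp exponent $\alpha_{N,p}$ equals $\pi$ in this case, and that the constraint on the seminorm in Martinazzi's result matches the one imposed here.

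For the exponent, I would substitute $N=1$, $s=1/2$, $p=2$ into
\[
\alpha_{N,p} \;=\; \frac{N}{\omega_{N-1}}\left(\frac{\Gamma((N-s)/2)}{\Gamma(s/2)\,2^{s}\,\pi^{N/2}}\right)^{-p/(p-1)}.
\]
Using $\omega_{0}=|S^{0}|=2$, $\Gamma((N-s)/2)=\Gamma(1/4)=\Gamma(s/2)$, $2^{s}=\sqrt{2}$ and $\pi^{N/2}=\sqrt{\pi}$, the inner factor collapses to $(2\pi)^{-1/2}$, so that $\alpha_{1,2}=\tfrac{1}{2}\cdot 2\pi=\pi$, as claimed. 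For the seminorm, the convention is consistent: a function in $\tilde H^{1/2,2}(I)$ is an element of $H^{1/2,2}(\re)$ vanishing on $\re\setminus I$, and $(-\lap)^{1/4}u$ is computed on $\re$ via its Fourier definition; hence $\|(-\lap)^{1/4}u\|_{L^{2}(I)}\le 1$ is exactly the admissibility condition in \cite{Martinazzi} applied to the zero-extension of $u$.

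Should a self-contained argument be desired, the natural route is the Adams strategy adapted to the one-dimensional Riesz potential. Write $u=I_{1/2}*f$ with $f=(-\lap)^{1/4}u$, $\|f\|_{L^{2}(\re)}\le 1$, where $I_{1/2}(x)=(2\pi)^{-1/2}|x|^{-1/2}$. Apply O'Neil's rearrangement inequality to bound $u^{*}$ by a convolution of the decreasing rearrangement $f^{*}$ with the Riesz kernel on $(0,|I|)$; change variables $t=\log(|I|/|x|)$ to recast the target estimate $\int_{I} e^{\alpha u^{2}}\,dx$ as an estimate of Adams--Garsia type on $(0,\infty)$. The subcritical case $\alpha<\pi$ is then straightforward, while at $\alpha=\pi$ one invokes the sharp Adams--Garsia lemma. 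Finally, divergence for $\alpha>\pi$ is proved by testing on a Moser-type sequence of truncated logarithmic profiles concentrating at an interior point of $I$, whose $(-\lap)^{1/4}$-norm and pointwise values can be estimated in closed form.

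The delicate step in such a direct proof would be the sharpness at the borderline $\alpha=\pi$, which is where the explicit value of the Riesz kernel constant $(2\pi)^{-1/2}$ enters, producing exactly the exponent $\pi$ via the Adams--Garsia mechanism. Since Martinazzi's theorem is available and already contains this information, I would simply cite it and restrict the proof to the constant verification above.
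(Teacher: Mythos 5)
Your proposal is correct and takes essentially the same route as the paper: the paper itself presents this proposition as nothing more than the specialization of Martinazzi's general fractional Trudinger--Moser inequality to $N=1$, $s=1/2$, $p=2$, stating it immediately after quoting that theorem with the phrase ``can be read as,'' without further proof. Your explicit check that $\alpha_{1,2}=\tfrac{1}{2}\bigl(\Gamma(1/4)/(\Gamma(1/4)\sqrt{2}\sqrt{\pi})\bigr)^{-2}=\pi$ is accurate and in fact supplies the one small verification the paper leaves implicit.
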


For the fractional Adachi-Tanaka type Trudinger-Moser inequality on the whole line, put
\begin{equation}
\label{Aalpha}
	A(\alpha) = \sup_{u \in H^{1/2,2}(\re) \setminus \{0\} \atop \| (-\Delta)^{1/4} u \|_{L^2(\re)} \le 1} \frac{1}{\| u \|_{L^2(\re)}^2} \int_{\re} \( e^{\alpha u^2} - 1 \) dx.
\end{equation}
Then by the precedent results by Ogawa-Ozawa \cite{Ogawa-Ozawa} and Ozawa \cite{Ozawa(JFA)},
it is known that $A(\alpha) < \infty$ for small exponent $\alpha$.

On the other hand, it is already known a fractional Li-Ruf type Trudinger-Moser inequality on $H^{1/2,2}(\re)$:
\begin{prop}{(Iula-Maalaoui-Martinazzi \cite{Iula-Maalaoui-Martinazzi})}
\label{Prop:fractional Li-Ruf}
\begin{equation}
\label{Balpha}
	B(\alpha) = \sup_{u \in H^{1/2,2}(\re) \atop \| u \|_{H^{1/2,2}(\re)} \le 1} \int_{\re} \( e^{\alpha u^2} - 1 \) dx
	\begin{cases}
	&< \infty, \quad \alpha \, \le \, \pi, \\
	&= \infty, \quad \alpha > \pi.
	\end{cases}
\end{equation}
Here
\[
	\| u \|_{H^{1/2,2}(\re)} = \( \| (-\Delta)^{1/4} u \|_{L^2(\re)}^2 + \| u \|_{L^2(\re)}^2 \)^{1/2}
\]
is the full Sobolev norm on $H^{1/2,2}(\re)$.
\end{prop}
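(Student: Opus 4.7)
\emph{Finiteness for $\alpha\le\pi$.} I would follow the Ruf/Li-Ruf decomposition scheme, using the bounded-interval inequality from Proposition \ref{Prop:fractional TM} as a black box. First, the fractional P\'olya-Szeg\H{o} inequality (symmetric decreasing rearrangement decreases $\|(-\Delta)^{1/4}u\|_{L^2}$ and preserves $\|u\|_{L^2}$ and $\int_{\re}(e^{\alpha u^2}-1)\,dx$) reduces the problem to $u$ even, nonnegative, and nonincreasing on $[0,\infty)$. Then I would split $\int_{\re}(e^{\alpha u^2}-1)\,dx=I_1+I_2$ according to whether $|u|\le 1$ or $|u|>1$. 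On $\{|u|\le 1\}$ the elementary bound $e^{\alpha u^2}-1\le(e^\alpha-1)u^2$ gives $I_1\le(e^\alpha-1)\|u\|_{L^2}^2$. For $I_2$, set $\Omega=\{|u|>1\}$, a bounded interval with $|\Omega|\le\|u\|_{L^2}^2$, and introduce the truncation $v=(|u|-1)_+\mathrm{sign}(u)\in\tilde H^{1/2,2}(\Omega)$. Since truncation is a pointwise contraction, $[v]_{W^{1/2,2}}\le[u]_{W^{1/2,2}}$, and by the norm identification recalled in the introduction one obtains
\begin{equation*}
\|(-\Delta)^{1/4}v\|_{L^2}^2 \le \|(-\Delta)^{1/4}u\|_{L^2}^2 \le 1-\|u\|_{L^2}^2.
\end{equation*}

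\emph{Applying Proposition \ref{Prop:fractional TM} on $\Omega$.} Writing $u^2=v^2+2|v|+1\le(1+\eps)v^2+(1+1/\eps)$ on $\Omega$ by Young's inequality, one has $I_2\le e^{\alpha(1+1/\eps)}\int_\Omega e^{\alpha(1+\eps)v^2}\,dx$. Rescaling $v$ to unit seminorm and invoking Proposition \ref{Prop:fractional TM}, the quantity that must respect the Trudinger-Moser threshold is $\alpha(1+\eps)\|(-\Delta)^{1/4}v\|_{L^2}^2\le\alpha(1+\eps)(1-\|u\|_{L^2}^2)$, which for $\alpha\le\pi$ is at most $\pi$ provided $(1+\eps)(1-\|u\|_{L^2}^2)\le 1$. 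A small choice of $\eps$ handles the regime $\|u\|_{L^2}^2\ge\delta$ for any fixed $\delta>0$. In the residual regime $\|u\|_{L^2}^2<\delta$ (so $\mu:=\|(-\Delta)^{1/4}u\|_{L^2}$ is close to $1$), I would apply the Adachi-Tanaka-type supremum \eqref{Aalpha} to $u/\mu$ with exponent $\alpha\mu^2\le\pi(1-\|u\|_{L^2}^2)<\pi$ to obtain $\int_\re(e^{\alpha u^2}-1)\,dx\le A(\alpha\mu^2)\|u\|_{L^2}^2/\mu^2$, and then deduce uniform boundedness from the quantitative behavior of $A(\alpha)$ as $\alpha\nearrow\pi$.

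\emph{Divergence for $\alpha>\pi$.} For sharpness I would exhibit a Moser-type sequence $\{M_n\}\subset H^{1/2,2}(\re)$ of the form
\begin{equation*}
M_n(x)=\frac{1}{\sqrt{\pi}}\begin{cases}\sqrt{\log n},& |x|\le 1/n,\\ (\log n)^{-1/2}\log(1/|x|),& 1/n<|x|<1,\\ 0,& |x|\ge 1,\end{cases}
\end{equation*}
following the constructions in \cite{Iula-Maalaoui-Martinazzi,Martinazzi}. The delicate verification is $\|(-\Delta)^{1/4}M_n\|_{L^2}^2=1+o(1)$ together with $\|M_n\|_{L^2}^2=o(1)$, so that after normalization by $\|M_n\|_{H^{1/2,2}}$ the value at the origin still satisfies $M_n^2(0)\ge(1-o(1))(\log n)/\pi$. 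Then on $|x|\le 1/n$ one has $e^{\alpha M_n^2}\ge n^{\alpha/\pi-o(1)}$, yielding $\int_\re(e^{\alpha M_n^2}-1)\,dx\ge Cn^{\alpha/\pi-1-o(1)}\to\infty$ whenever $\alpha>\pi$.

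\emph{Main obstacle.} The hardest point is the critical case $\alpha=\pi$ in the decomposition argument: no slack is available in the exponent, so one must exploit the full-norm constraint $\|u\|_{H^{1/2,2}}\le 1$ tightly to gain the factor $1-\|u\|_{L^2}^2$ in the seminorm of the truncation $v$, and the patch at small $\|u\|_{L^2}$ depends on sharp quantitative control of $A(\alpha)$ near $\alpha=\pi$. A close second is the precise asymptotic computation of $\|(-\Delta)^{1/4}M_n\|_{L^2}$ for the fractional Moser sequence, which lacks the clean integration-by-parts structure of the classical gradient and demands careful analysis of the nonlocal operator on a logarithmic profile (for instance via the Riesz potential representation or conformal transfer to $S^1$).
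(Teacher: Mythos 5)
The paper does not reprove this proposition but cites it from Iula--Maalaoui--Martinazzi; the same decomposition scheme is, however, reproduced in the proof of Proposition \ref{Prop:Fbeta} in Section 4, and that is the right object of comparison. Your outline — rearrangement, a two-region split, truncation, and an appeal to the bounded-interval inequality of Proposition \ref{Prop:fractional TM} — is the right scaffolding, and your Moser-sequence argument for divergence at $\alpha>\pi$ is fine. But the specific truncation you use does not close the critical case $\alpha=\pi$, which you yourself flag as the main obstacle.

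The gap is the regime where $\|u\|_{L^2}$ is small. You truncate at the fixed level $|u|=1$ and write $u^2\le(1+\eps)v^2+(1+1/\eps)$ by Young. To land inside Proposition \ref{Prop:fractional TM} at the endpoint $\alpha=\pi$ you need the coefficient of $v^2$ times $\|(-\Delta)^{1/4}v\|_{L^2}^2\le 1-\|u\|_{L^2}^2$ to be at most $1$, forcing $\eps\lesssim\|u\|_{L^2}^2$; but then the prefactor $e^{\pi(1+1/\eps)}$ is of order $e^{c/\|u\|_{L^2}^2}$ and blows up as $\|u\|_{L^2}\to 0$. Your proposed patch for that regime — bounding the integral by $A(\alpha\mu^2)\|u\|_{L^2}^2/\mu^2$ and then invoking the growth of $A$ near $\pi$ — is circular in the present framework: both the finiteness $A(\alpha)<\infty$ for all $\alpha<\pi$ (Theorem \ref{Theorem:fractional AT}, via Lemma \ref{Lemma2}) and the quantitative bound $A(\alpha)\le C_2/(1-\alpha/\pi)$ (Theorem \ref{Theorem:asymptotic}, via Theorem \ref{Theorem:relation}) are \emph{consequences} of $B(\pi)<\infty$, not available independently of it.

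The fix, as carried out in the proof of Proposition \ref{Prop:Fbeta} following \cite{Iula-Maalaoui-Martinazzi}, is to truncate at the boundary value rather than at a fixed level: fix $I=(-1/2,1/2)$, set $v=u-u(1/2)$ on $I$ and $v=0$ outside, so $v\in\tilde H^{1/2,2}(I)$. The radial lemma (\ref{Radial Lemma}) gives $u(1/2)^2\le\|u\|_{L^2}^2$, and then
\begin{equation*}
u^2\le v^2\big(1+u(1/2)^2\big)+\big(1+u(1/2)^2\big)\le v^2\big(1+\|u\|_{L^2}^2\big)+2\quad\text{on }I.
\end{equation*}
With $w=v\sqrt{1+\|u\|_{L^2}^2}$ one has $\|(-\Delta)^{1/4}w\|_{L^2}^2\le(1+\|u\|_{L^2}^2)(1-\|u\|_{L^2}^2)\le1$, so Proposition \ref{Prop:fractional TM} applies directly at $\alpha=\pi$ with the uniform additive constant $2$. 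The radial lemma thus does double duty — it produces both the exact compensating coefficient in front of $v^2$ and a bounded additive error — and this is precisely what is lost when you truncate at a fixed level and use Young with a free parameter. Your treatment of the complementary region (the bound $e^{\alpha u^2}-1\le(e^\alpha-1)u^2$ on $\{|u|\le1\}$, or equivalently the series estimate outside $I$ via the radial lemma) is correct, as is the Moser sequence verification of sharpness.
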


Concerning $A(\alpha)$ in (\ref{Aalpha}), a natural question is that to what range of the exponent $\alpha$ the supremum is finite.
As pointed out in \cite{Iannizzotto-Squassina}, it remained an open problem for a while.
In this paper, first we prove the finiteness of supremum in the full range of values of exponent.

\begin{theorem}{(Full range Adachi-Tanaka type on $H^{1/2,2}(\re)$)}
\label{Theorem:fractional AT}
We have
\[
	A(\alpha) = \sup_{u \in H^{1/2,2}(\re) \setminus \{0\} \atop \| (-\Delta)^{1/4} u \|_{L^2(\re)} \le 1} \frac{1}{\| u \|_{L^2(\re)}^2} \int_{\re} \( e^{\alpha u^2} - 1 \) dx.
	\begin{cases}
	&< \infty, \quad \alpha \, < \, \pi, \\
	&= \infty, \quad \alpha \ge \pi.
	\end{cases}
\]
\end{theorem}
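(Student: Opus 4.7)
The strategy splits at $\alpha = \pi$. For the subcritical range $\alpha < \pi$, I will derive $A(\alpha) < \infty$ from the critical Li-Ruf-type inequality of Proposition~\ref{Prop:fractional Li-Ruf} via the dilation $u_\lambda(x) := u(\lambda x)$, which in one dimension with $s = 1/2$, $p = 2$ satisfies $sp = N$ and therefore preserves $\|(-\Delta)^{1/4}u\|_{L^2(\re)}$ while rescaling $\|u\|_{L^2(\re)}^2$ by $\lambda^{-1}$. Writing $a = \|(-\Delta)^{1/4}u\|_{L^2}^2 \le 1$ and $b = \|u\|_{L^2}^2 > 0$, the choice $\lambda = b/(\pi/\alpha - a)$ is admissible (since $a\le 1<\pi/\alpha$) and yields $\|u_\lambda\|_{H^{1/2,2}(\re)}^2 = \pi/\alpha$. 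Hence $\sqrt{\alpha/\pi}\,u_\lambda$ has unit full Sobolev norm, so Proposition~\ref{Prop:fractional Li-Ruf} at exponent $\pi$ gives $\int_{\re}(e^{\alpha u_\lambda^2}-1)\,dx \le B(\pi)$. Undoing the dilation (a factor $\lambda^{-1}$ on the left) and using $\pi - a\alpha \ge \pi - \alpha$ produces the explicit estimate $A(\alpha) \le \frac{\alpha}{\pi-\alpha}B(\pi)$.

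For the supercritical range $\alpha > \pi$, I invoke Proposition~\ref{Prop:fractional TM}: there exists $u_n \in \tilde H^{1/2,2}(I)$ with $\|(-\Delta)^{1/4}u_n\|_{L^2(\re)}\le 1$ and $\int_I e^{\alpha u_n^2}\,dx \to \infty$. Extending $u_n$ by zero outside $I$, a fractional Poincar\'e inequality on the bounded interval $I$ bounds $\|u_n\|_{L^2(\re)}$ uniformly, so the ratio defining $A(\alpha)$ along $u_n$ diverges.

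The remaining equality case $\alpha = \pi$ is the delicate point, since Proposition~\ref{Prop:fractional TM} only yields $TM(I,\pi) < \infty$. Here I will employ an explicit concentrating Moser-type sequence $M_n \in \tilde H^{1/2,2}(I)$ of the form
\[
	M_n(x) = \frac{1}{\sqrt\pi}\begin{cases} (\log n)^{1/2}, & |x|\le 1/n,\\ (\log n)^{-1/2}\log(1/|x|), & 1/n\le |x|\le 1,\\ 0,& |x|\ge 1,\end{cases}
\]
whose analysis parallels the analogous sequences of Martinazzi and Iula-Maalaoui-Martinazzi. The central plateau gives $\int_{|x|\le 1/n}(e^{\pi M_n^2}-1)\,dx = (n-1)\cdot 2/n \to 2$, while direct integration of the logarithmic profile (via the substitution $x = e^{-t}$) yields $\|M_n\|_{L^2(\re)}^2 = O(1/\log n) \to 0$. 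After normalizing so that $\|(-\Delta)^{1/4}M_n\|_{L^2(\re)}^2 \le 1$, the ratio $\|M_n\|_{L^2(\re)}^{-2}\int_{\re}(e^{\pi M_n^2}-1)\,dx$ diverges, proving $A(\pi) = \infty$. The principal technical obstacle lies in the precise evaluation of the nonlocal seminorm $[M_n]_{W^{1/2,2}(\re)}^2 = 2\pi\|(-\Delta)^{1/4}M_n\|_{L^2(\re)}^2$; unlike its local counterpart, this requires a careful splitting of the double integral into the plateau, logarithmic and exterior regions, and I will import this computation from the fractional Trudinger-Moser literature.
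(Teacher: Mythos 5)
Your dilation argument is essentially the same as the paper's Lemma~\ref{Lemma2}. With $a = \|(-\Delta)^{1/4}u\|^2_{L^2}$, $b = \|u\|^2_{L^2}$ and $\lambda = b/(\pi/\alpha - a)$, the function $\sqrt{\alpha/\pi}\,u_\lambda$ lies in the unit ball of $H^{1/2,2}(\re)$ and undoing the dilation yields
\[
\frac{1}{b}\int_{\re}\big(e^{\alpha u^2}-1\big)dx \le \frac{1}{\pi/\alpha - a}B(\pi) \le \frac{\alpha}{\pi - \alpha}B(\pi),
\]
which matches the paper's bound $\frac{\alpha/\pi}{1-\alpha/\pi}B(\pi)$. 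The paper first normalizes $\|u\|_{L^2}=1$ via scale invariance (Lemma~\ref{Lemma1}) and then applies the dilation, but the resulting estimate is identical.

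\textbf{Range $\alpha > \pi$.} Your route through Proposition~\ref{Prop:fractional TM} plus a Poincar\'e inequality on $I$ is valid, but superfluous: since $\alpha \mapsto \int(e^{\alpha u^2}-1)dx$ is monotone, $A(\pi)=\infty$ already forces $A(\alpha)=\infty$ for $\alpha>\pi$. The paper accordingly only treats $\alpha=\pi$.

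\textbf{Critical case $\alpha = \pi$ --- this is where the gap lies.} You estimate $\int_{|x|\le 1/n}(e^{\pi M_n^2}-1)\,dx \to 2$ and $\|M_n\|^2_{L^2} = O(1/\log n)$, and then assert that ``after normalizing so that $\|(-\Delta)^{1/4}M_n\|^2_{L^2}\le 1$ the ratio diverges.'' But $M_n$ itself has $\|(-\Delta)^{1/4}M_n\|^2_{L^2} = 1 + d_n$ with $d_n \to 0^+$, so the admissible test function is $\tilde M_n = M_n/\|(-\Delta)^{1/4}M_n\|_{L^2}$, whose plateau value satisfies $\pi\tilde M_n^2 = \frac{\log n}{1+d_n}$. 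The plateau contribution becomes
\[
\int_{|x|\le 1/n} e^{\pi\tilde M_n^2}\,dx = \frac{2}{n}\cdot n^{1/(1+d_n)} = 2\,n^{-d_n/(1+d_n)} = 2\exp\!\Big(-\frac{d_n\log n}{1+d_n}\Big),
\]
which stays bounded away from zero \emph{if and only if} $d_n\log n = O(1)$. The soft estimate $d_n = o(1)$ does not suffice: if, say, $d_n \sim (\log n)^{-1/2}$, the plateau integral decays like $e^{-\sqrt{\log n}}$ and your claimed divergence of the ratio collapses. This quantitative issue is exactly why the paper isolates the refined estimate (\ref{Moser_estimates(2)}),
\[
\|(-\Delta)^{1/4}u_\eps\|^2_{L^2} \le \pi\big(1 + (C\log(1/\eps))^{-1}\big),
\]
which the author extracts only after a ``careful look'' at Iula's Proposition~2.2; the limiting value $\pi + o(1)$ stated in (\ref{Moser_estimates(1)}) is not enough. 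Your proposal acknowledges that computing the nonlocal seminorm is technical, but it does not identify that the \emph{rate} of convergence of the seminorm to $\pi$ (not merely its limit) is what makes or breaks the argument, and your step ``after normalizing \ldots the ratio diverges'' is currently a non sequitur. To repair it, you would need to quote the $O(1/\log n)$ bound explicitly and re-derive the plateau lower bound for $\tilde M_n$, exactly as the paper does.
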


Ozawa \cite{Ozawa} proved that the Adachi-Tanaka type Trudinger-Moser inequality is equivalent to the Gagliardo-Nirenberg type inequality, 
and he also proved an exact relation between the best constants of both inequalities.
As a result, we have the next corollary.
\begin{cor}
Set
\[
	\beta_0 = \limsup_{q \to \infty} \sup_{u \in H^{1/2,2}(\re), u \ne 0} \frac{\| u \|_{L^q(\re)}}{q^{1/2} \| (-\Delta)^{1/4} u \|_{L^2(\re)}^{1-2/q} \| u \|_{L^2(\re)}^{2/q}}.
\] Then $\beta_0 = (2\pi e)^{-1/2}$.
\end{cor}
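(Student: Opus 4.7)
The plan is to derive the corollary from Theorem \ref{Theorem:fractional AT} together with Ozawa's equivalence \cite{Ozawa} between the Adachi-Tanaka type Trudinger-Moser inequality and the Gagliardo-Nirenberg type inequality. Transcribed into the $H^{1/2,2}(\re)$ setting, the sharp relation between the critical constants reads
\begin{equation*}
\alpha_0 \, := \, \sup\{\alpha > 0 \, : \, A(\alpha) < \infty\} \, = \, \frac{1}{2 e \beta_0^2}.
\end{equation*}
Theorem \ref{Theorem:fractional AT} identifies $\alpha_0 = \pi$, so plugging in yields $\beta_0^2 = \frac{1}{2\pi e}$ and hence $\beta_0 = (2\pi e)^{-1/2}$.

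To make the matching of constants transparent, I would recall the mechanism behind the first (easy) half $\alpha_0 \ge (2e\beta_0^2)^{-1}$. Expanding the exponential,
\begin{equation*}
\frac{1}{\| u \|_{L^2(\re)}^2} \int_{\re} \( e^{\alpha u^2} - 1 \) dx \, = \, \sum_{k=1}^{\infty} \frac{\alpha^k}{k!} \cdot \frac{\| u \|_{L^{2k}(\re)}^{2k}}{\| u \|_{L^2(\re)}^2}.
\end{equation*}
The definition of $\beta_0$ gives $\| u \|_{L^{2k}(\re)}^{2k} \le \bigl( (\beta_0 + \eps)(2k)^{1/2} \bigr)^{2k} \| (-\Delta)^{1/4} u \|_{L^2(\re)}^{2k-2} \| u \|_{L^2(\re)}^2$ for all sufficiently large $k$, and Stirling's formula $k! \sim \sqrt{2\pi k}\,(k/e)^k$ then forces the general term, on the constraint set $\| (-\Delta)^{1/4} u \|_{L^2(\re)} \le 1$, to behave like $(2 e (\beta_0 + \eps)^2 \alpha)^k / \sqrt{2\pi k}$. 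Uniform convergence holds as soon as $\alpha < (2 e \beta_0^2)^{-1}$, proving the lower bound on $\alpha_0$.

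The reverse direction $\alpha_0 \le (2 e \beta_0^2)^{-1}$ is the more delicate half: one takes for each large $q$ a quasi-extremal $u_q$ of the Gagliardo-Nirenberg quotient and shows through a concentration/rescaling argument that the Adachi-Tanaka functional blows up along the resulting sequence whenever $\alpha > (2 e \beta_0^2)^{-1}$. This is the step I would expect to be the main obstacle were one reproving the equivalence \emph{ab initio}, but since the excerpt already invokes \cite{Ozawa} for the exact relation between the best constants, the corollary reduces to the substitution $\alpha_0 = \pi$ into $\alpha_0 = (2 e \beta_0^2)^{-1}$.
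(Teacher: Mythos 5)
Your proof is correct and follows exactly the route the paper takes: the paper derives the corollary by invoking Ozawa's exact relation between the Adachi--Tanaka critical exponent and the Gagliardo--Nirenberg constant and substituting $\alpha_0 = \pi$ from Theorem \ref{Theorem:fractional AT}. The extra Stirling-based sketch of the easy direction is a helpful consistency check of the constant $(2e\beta_0^2)^{-1}$, but is not needed beyond what the cited result of Ozawa already supplies.
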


Furthermore, we obtain the relation between the suprema of both critical and subcritical Trudinger-Moser type inequalities
along the line of Lam-Lu-Zhang \cite{Lam-Lu-Zhang}.
\begin{theorem}{(Relation)}
We have
\label{Theorem:relation}
\[
	B(\pi) = \sup_{\alpha \in (0, \pi)} \frac{1 - \(\alpha / \pi \)}{\(\alpha / \pi\)} A(\alpha).
\]
\end{theorem}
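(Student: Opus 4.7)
\textbf{Proof plan for Theorem \ref{Theorem:relation}.} The plan is to establish the identity by proving the two matching inequalities between $B(\pi)$ and $M := \sup_{\alpha \in (0,\pi)} \frac{1-\alpha/\pi}{\alpha/\pi} A(\alpha)$.

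\emph{Upper bound $B(\pi) \le M$.} I would pick an arbitrary admissible $u$ for $B(\pi)$, so that $\|u\|_{H^{1/2,2}(\re)}^2 \le 1$, and set $\tau^2 := \|(-\Delta)^{1/4} u\|_{L^2(\re)}^2$. The extremal cases $\tau=0$ and $\tau=1$ both force $u \equiv 0$ by the norm constraint, so I assume $\tau \in (0,1)$. Normalising $v := u/\tau$ and setting $\alpha := \pi \tau^2 \in (0,\pi)$, the identity $\pi u^2 = \alpha v^2$ together with $\|(-\Delta)^{1/4} v\|_{L^2(\re)} = 1$ allows me to apply the Adachi--Tanaka bound of Theorem \ref{Theorem:fractional AT} to $v$:
\[
\int_{\re} (e^{\pi u^2} - 1)\, dx \le A(\alpha)\|v\|_{L^2(\re)}^2 = \frac{A(\alpha)\|u\|_{L^2(\re)}^2}{\tau^2} \le \frac{A(\alpha)(1-\tau^2)}{\tau^2} = \frac{1-\alpha/\pi}{\alpha/\pi} A(\alpha).
\]
Taking the supremum over $u$ yields $B(\pi) \le M$.

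\emph{Lower bound $B(\pi) \ge M$.} I would fix $\alpha \in (0,\pi)$ and a nonzero $u$ admissible for $A(\alpha)$, i.e.\ with $\|(-\Delta)^{1/4} u\|_{L^2(\re)} = 1$, and construct a trial function for $B(\pi)$ by rescaling. A direct Fourier-side computation shows that in the one-dimensional case at $s=1/2$ the seminorm is scale-invariant: $\|(-\Delta)^{1/4} u_\mu\|_{L^2(\re)}^2 = \|(-\Delta)^{1/4} u\|_{L^2(\re)}^2$ for $u_\mu(x) := u(\mu x)$, while $\|u_\mu\|_{L^2(\re)}^2 = \mu^{-1}\|u\|_{L^2(\re)}^2$. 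Then I would set
\[
w(x) := \sqrt{\alpha/\pi}\, u(\mu x), \qquad \mu := \frac{\alpha \|u\|_{L^2(\re)}^2}{\pi - \alpha},
\]
which makes $\|w\|_{H^{1/2,2}(\re)}^2 = \alpha/\pi + (\alpha/\pi)\mu^{-1}\|u\|_{L^2(\re)}^2 = 1$. Since $\pi w(x)^2 = \alpha u(\mu x)^2$, a change of variables gives
\[
B(\pi) \ge \int_{\re} (e^{\pi w^2} - 1)\, dx = \mu^{-1}\int_{\re} (e^{\alpha u^2} - 1)\, dx = \frac{1-\alpha/\pi}{\alpha/\pi}\cdot\frac{1}{\|u\|_{L^2(\re)}^2}\int_{\re} (e^{\alpha u^2} - 1)\, dx.
\]
Taking the supremum over admissible $u$ and then over $\alpha \in (0,\pi)$ yields $B(\pi) \ge M$.

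I do not anticipate any serious obstacle beyond routine bookkeeping. The entire identity hinges on the scale-invariance of the $(-\Delta)^{1/4}$-seminorm on $\re$, which is precisely the feature forcing the weight $(1-\alpha/\pi)/(\alpha/\pi)$ in the statement: the splitting $\tau^2 + (1-\tau^2) \le 1$ of the $H^{1/2,2}$-norm constraint dictates the parameter choice $\alpha = \pi\tau^2$ in one direction and the rescaling $\mu$ in the other, and the two constructions dualise each other exactly. Note also that both sides are finite thanks to Proposition \ref{Prop:fractional Li-Ruf} and Theorem \ref{Theorem:fractional AT}, so no finiteness issue arises when passing to the suprema.
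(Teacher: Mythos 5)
Your proof is correct and takes essentially the same approach as the paper's: one direction is the rescaling argument of Lemma~\ref{Lemma2}, and the other is obtained by normalizing an admissible function for $B(\pi)$ so it becomes a test function for $A(\pi\tau^2)$. The only difference is cosmetic --- in the direction $B(\pi)\le M$, the paper passes through a maximizing sequence and an auxiliary spatial rescaling to force $\|v_n\|_{L^2}\le 1$, whereas you bound $\int(e^{\pi u^2}-1)\,dx$ directly for an arbitrary admissible $u$ by using the $\|v\|_{L^2}^2$ normalization already built into the definition of $A(\alpha)$, which is slightly cleaner. (One small imprecision: when you say a $u$ admissible for $A(\alpha)$ satisfies $\|(-\Delta)^{1/4}u\|_{L^2}=1$, the admissible set only requires $\le 1$; but your computation goes through verbatim with $\le$, since then $\|w\|^2_{H^{1/2,2}}\le 1$ and $w$ is still a valid test function for $B(\pi)$.)
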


Also we obtain how Adachi-Tanaka type supremum $A(\alpha)$ behaves when $\alpha$ tends to $\pi$.

\begin{theorem}{(Asymptotic behavior)}
\label{Theorem:asymptotic}
There exist $C_1, C_2 > 0$ such that for any $\alpha < \pi$ which is close enough to $\pi$, 
it holds
\[
	\frac{C_1}{1 - \alpha / \pi} \le A(\alpha) \le \frac{C_2}{1 - \alpha / \pi}.
\]
\end{theorem}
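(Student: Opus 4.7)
The plan is to treat the upper and lower bounds separately. The upper bound follows immediately from Theorem \ref{Theorem:relation}: since $B(\pi) < \infty$ by Proposition \ref{Prop:fractional Li-Ruf} and $B(\pi) \ge \frac{1-\alpha/\pi}{\alpha/\pi} A(\alpha)$ for every $\alpha \in (0, \pi)$, rearranging gives
\[
A(\alpha) \le \frac{\alpha/\pi}{1-\alpha/\pi}\, B(\pi) \le \frac{B(\pi)}{1-\alpha/\pi},
\]
so one may take $C_2 = B(\pi)$.

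For the lower bound the plan is to test (\ref{Aalpha}) against a Moser-type concentrating family adapted to $H^{1/2,2}(\re)$, such as
\[
\phi_n(x) = \frac{1}{\sqrt{\pi}}\begin{cases} \sqrt{\log n}, & |x| \le 1/n, \\ \log(1/|x|)/\sqrt{\log n}, & 1/n \le |x| \le 1, \\ 0, & |x| > 1, \end{cases}
\]
as employed in the fractional Trudinger--Moser setting by \cite{Martinazzi} and \cite{Iula-Maalaoui-Martinazzi}, and to establish (i) $a_n := \|(-\Delta)^{1/4}\phi_n\|_{L^2(\re)}^2 = 1 + O(1/\log n)$ (possibly after a mild multiplicative renormalization) and (ii) $b_n := \|\phi_n\|_{L^2(\re)}^2 \le C/\log n$, the latter being a direct computation after the change of variable $y = \log(1/|x|)$. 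Given (i)--(ii), for $\alpha < \pi$ close to $\pi$ I would pick $n = n(\alpha) \to \infty$ so that $(1-\alpha/\pi)\log n = c_*$ for a fixed small constant $c_* > 0$, and use $v_n := \phi_n/\sqrt{a_n}$ (which has $\|(-\Delta)^{1/4} v_n\|_{L^2(\re)} = 1$) as a competitor in (\ref{Aalpha}). The core $|x| \le 1/n$ contributes
\[
\int_{|x|\le 1/n}\!\(e^{(\alpha/a_n)\phi_n^2}-1\)dx = \frac{2}{n}\(n^{\alpha/(\pi a_n)}-1\),
\]
and by (i),
\[
\bigl(\tfrac{\alpha}{\pi a_n}-1\bigr)\log n = -(1-\alpha/\pi)\log n + O(1) = -c_* + O(1),
\]
so this core integral is bounded below by a positive constant independent of $\alpha$. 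Combining with (ii),
\[
A(\alpha) \ge \frac{a_n}{b_n}\int_\re\!\(e^{(\alpha/a_n)\phi_n^2}-1\)dx \gtrsim \frac{1}{b_n} \gtrsim \log n \gtrsim \frac{1}{1-\alpha/\pi},
\]
as desired.

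The main obstacle is verifying (i) with the sharp $O(1/\log n)$ remainder: unlike the classical local case where $\|\nabla m_n\|_{L^N(\re^N)}^N = 1$ exactly by design of the normalization, $a_n$ is here the non-local double integral $\frac{1}{2\pi}\int_\re\int_\re |\phi_n(x)-\phi_n(y)|^2/(x-y)^2\,dxdy$ which does not split cleanly across the core/cone/exterior regions and requires careful tracking of cross-region and tail interactions; fortunately such expansions are available in the fractional Trudinger--Moser literature and can be invoked or lightly adapted. Two minor technical points are that $n$ is discrete, so the matching $(1-\alpha/\pi)\log n = c_*$ holds only up to $O(1)$ (harmless, since it is absorbed in the exponent estimate above), and that the sign of the leading correction in (i) is irrelevant for the argument, which uses only $|a_n - 1| = O(1/\log n)$.
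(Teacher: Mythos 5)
Your proposal is correct and follows essentially the same route as the paper's proof. The upper bound is obtained exactly as you describe from Theorem~\ref{Theorem:relation} together with $B(\pi)<\infty$ (Proposition~\ref{Prop:fractional Li-Ruf}). For the lower bound the paper tests against the Moser sequence $u_\eps$ of \eqref{Moser sequence} (your $\phi_n$ with $\eps = 1/n$, up to the factor $1/\sqrt{\pi}$ which the paper absorbs instead by defining $v_\eps = u_\eps/\|(-\Delta)^{1/4}u_\eps\|_{L^2}$), with the parameter tied to $\alpha$ by $\frac{1}{1-\alpha/\pi} \le \log(1/\eps) \le \frac{2}{1-\alpha/\pi}$, which is equivalent to your choice $(1-\alpha/\pi)\log n = c_*$. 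Two small remarks on your technical obstacle: the sharp estimate you need on $a_n$ is one-sided — only $a_n \le 1 + O(1/\log n)$ matters, since a smaller $a_n$ only enlarges the core exponent — and this is exactly the bound the paper extracts from Iula \cite{Iula} (Proposition~2.2 and equation~(35)), recorded here as \eqref{Moser_estimates(2)}; the paper quotes only a $\pi + o(1)$ lower bound \eqref{Moser_estimates(1)}, which, as you note, suffices.
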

Note that the estimate from the above follows from Theorem \ref{Theorem:relation} and Proposition \ref{Prop:fractional Li-Ruf}.
On the other hand, we will see that that the estimate from the below follows from a computation using the Moser sequence.

Concerning the existence of maximizers of Adachi-Tanaka type supremum $A(\alpha)$ in (\ref{Aalpha}), we see
\begin{theorem}{(Attainability of $A(\alpha)$)}
\label{Theorem:Attainability}
$A(\alpha)$ is attained for any $\alpha \in (0, \pi)$.
\end{theorem}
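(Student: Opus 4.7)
The plan is to run the direct method on a symmetrized, $L^2$-normalized maximizing sequence, exploiting the strict subcriticality $\alpha<\pi$ together with Theorem \ref{Theorem:fractional AT} to pass to the limit in the exponential integral, and to rule out loss of $L^2$ mass by comparing the maximal value with the purely quadratic lower bound $\alpha$.

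I would start by taking a maximizing sequence $\{u_n\}\subset H^{1/2,2}(\re)\setminus\{0\}$. Since the quotient $\int_{\re}(e^{\alpha u^2}-1)\,dx/\|u\|_{L^2}^2$ and the constraint $\|(-\lap)^{1/4}u\|_{L^2}\le 1$ are invariant under the dilation $u(x)\mapsto u(\la x)$, one may normalize $\|u_n\|_{L^2(\re)}=1$ and $\|(-\lap)^{1/4}u_n\|_{L^2(\re)}\le 1$. Next I would pass to the symmetric decreasing rearrangement: this preserves $\|u_n\|_{L^2}$ and $\int(e^{\alpha u_n^2}-1)\,dx$ while not increasing $\|(-\lap)^{1/4}u_n\|_{L^2}$, by the fractional Polya--Szego inequality in the spirit of Almgren--Lieb and Frank--Seiringer. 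Thus $u_n$ is nonnegative, even and nonincreasing in $|x|$, and up to a subsequence $u_n\weakto u$ in $H^{1/2,2}(\re)$, $u_n\to u$ in $L^q_{loc}(\re)$ for every $q<\infty$, and $u_n\to u$ a.e.

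The next step is to convert this into convergence of the nonlinear integral. Monotonicity together with the $L^2$-normalization yields the pointwise bound $|u_n(x)|\le 1/\sqrt{2|x|}$, which combined with local Rellich compactness upgrades the convergence to $u_n\to u$ strongly in $L^q(\re)$ for every $q\in(2,\infty)$. Fixing $\alpha'\in(\alpha,\pi)$, Theorem \ref{Theorem:fractional AT} provides
\[
\sum_{k=1}^{\infty}\frac{(\alpha')^k}{k!}\int_\re u_n^{2k}\,dx = \int_\re (e^{\alpha' u_n^2}-1)\,dx \le A(\alpha')<\infty,
\]
which dominates the corresponding series for $\alpha$ term by term. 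Applying Tannery's theorem (dominated convergence for series) and singling out the $k=1$ term (for which only $\|u\|_{L^2}^2\le 1$ is available from weak lower semicontinuity), I expect to obtain
\[
A(\alpha)=\lim_{n\to\infty}\int_\re (e^{\alpha u_n^2}-1)\,dx = \int_\re (e^{\alpha u^2}-1)\,dx+\alpha\bigl(1-\|u\|_{L^2(\re)}^2\bigr).
\]

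The main obstacle is ruling out the loss of $L^2$ mass, i.e., $\|u\|_{L^2}<1$ (including the possibility $u\equiv 0$). Since $u$ satisfies $\|(-\lap)^{1/4}u\|_{L^2}\le 1$, it is admissible in the definition of $A(\alpha)$, so $\int_\re(e^{\alpha u^2}-1)\,dx\le A(\alpha)\|u\|_{L^2}^2$. Substituting this into the identity above gives
\[
A(\alpha)\bigl(1-\|u\|_{L^2(\re)}^2\bigr)\le \alpha\bigl(1-\|u\|_{L^2(\re)}^2\bigr).
\]
The strict inequality $e^t-1>t$ for $t>0$ implies $A(\alpha)>\alpha$, via any nontrivial smooth compactly supported test function. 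Hence $\|u\|_{L^2(\re)}=1$, so $u$ is admissible and nonzero, and $\int_\re(e^{\alpha u^2}-1)\,dx/\|u\|_{L^2}^2=A(\alpha)$, i.e., $u$ attains $A(\alpha)$.
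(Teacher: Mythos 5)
Your proposal is correct and follows essentially the same blueprint as the paper's proof: rescale to $\|u_n\|_{L^2}=1$ using the dilation invariance (Lemma~\ref{Lemma1}), pass to the symmetric decreasing rearrangement and invoke the fractional P\'olya--Szeg\H{o} inequality, extract a weak limit $u$, pass to the limit in the exponential integral, and then rule out vanishing of the $L^2$ mass via the strict inequality $A(\alpha)>\alpha$. The one genuine technical variation lies in how you pass to the limit in the nonlinearity: you expand $e^{\alpha u_n^2}-1$ as a power series, obtain termwise convergence from the strong $L^{2k}$ ($k\ge 2$) convergence of the rearranged sequence (which, as in Proposition~\ref{prop:compactness}, follows from the radial decay $|u_n(x)|\le (2|x|)^{-1/2}$ plus local compactness and Vitali), and apply Tannery's theorem with the dominating majorant $\int u_n^{2k}\le k!\,(\alpha')^{-k}A(\alpha')$ coming from $A(\alpha')<\infty$ for some $\alpha'\in(\alpha,\pi)$. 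The paper instead (Proposition~\ref{Porp:Ishiwata-Nakamura-Wadade:Lemma3.1}) estimates $|\Psi_\alpha(u_n)-\Psi_\alpha(u)|$ via the mean value theorem and a three-term H\"older inequality, bounding $\|\Phi_\alpha(u_n)+\Phi_\alpha(u)\|_{L^b}$ by $A(b\alpha)$ with $b\alpha<\pi$. Both routes rest on the same two pillars --- finiteness of $A$ slightly above $\alpha$ and strong $L^q$ convergence for $q>2$ --- and your final bookkeeping (deducing $\|u\|_{L^2}=1$ from $A(\alpha)(1-\|u\|_{L^2}^2)\le\alpha(1-\|u\|_{L^2}^2)$ and $A(\alpha)>\alpha$) is an equally valid rearrangement of the paper's concluding chain of inequalities.
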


On the other hand, as for $B(\alpha)$ in (\ref{Balpha}), we have
\begin{theorem}{(Non-attainability of $B(\alpha)$)}
\label{Theorem:Non-attainability}
For $0 < \alpha << 1$, $B(\alpha)$ is not attained. 
\end{theorem}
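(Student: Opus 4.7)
My plan is to proceed by contradiction: I will establish a universal lower bound $B(\alpha)\ge\alpha$ via a vanishing sequence, suppose a maximizer $u_0$ exists, and then use a Taylor expansion of the exponential together with the fractional Gagliardo--Nirenberg inequality to show the two are incompatible for $\alpha$ sufficiently small.

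For the lower bound, I fix $u\in C_c^\infty(\re)\setminus\{0\}$ and set $u_\lambda(x)=u(\lambda x)$. A Fourier-side change of variable gives $\|(-\Delta)^{1/4}u_\lambda\|_{L^2}^2=\|(-\Delta)^{1/4}u\|_{L^2}^2=:A$ while $\|u_\lambda\|_{L^2}^2=\lambda^{-1}\|u\|_{L^2}^2$, so the normalized sequence $v_\lambda:=u_\lambda/\|u_\lambda\|_{H^{1/2,2}}$ satisfies $\|v_\lambda\|_{H^{1/2,2}}=1$ and, after substituting $y=\lambda x$,
\[
\int_\re\bigl(e^{\alpha v_\lambda^2}-1\bigr)\,dx=\frac{1}{\lambda}\int_\re\bigl(e^{\beta_\lambda u^2(y)}-1\bigr)\,dy,\qquad \beta_\lambda:=\frac{\alpha}{A+\|u\|_{L^2}^2/\lambda}.
\]
Since $\beta_\lambda\to 0$ and $u$ is bounded, the pointwise estimate $e^t-1=t+O(t^2)$ combined with $\beta_\lambda/\lambda\to\alpha/\|u\|_{L^2}^2$ makes the right-hand side converge to $\alpha$ as $\lambda\to 0^+$. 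Hence $B(\alpha)\ge\alpha$ for every $\alpha>0$.

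Now suppose $B(\alpha)$ is attained by some $u_0$; normalizing I may take $\|u_0\|_{H^{1/2,2}}=1$. Both $u_0\not\equiv 0$ (otherwise $B(\alpha)=0<\alpha$) and $(-\Delta)^{1/4}u_0\not\equiv 0$ (otherwise $\widehat{u_0}$ is supported on the null set $\{0\}$, which together with $u_0\in L^2(\re)$ forces $u_0\equiv 0$), so $T:=\|(-\Delta)^{1/4}u_0\|_{L^2}^2\in(0,1)$ and $S:=\|u_0\|_{L^2}^2=1-T\in(0,1)$. Taylor expansion turns $B(\alpha)\ge\alpha$ into
\[
\alpha T\le\sum_{k\ge 2}\frac{\alpha^k}{k!}\|u_0\|_{L^{2k}}^{2k}.
\]
The fractional Gagliardo--Nirenberg inequality $\|u\|_{L^q}^q\le C^q q^{q/2}\|(-\Delta)^{1/4}u\|_{L^2}^{q-2}\|u\|_{L^2}^2$, whose sharp large-$q$ rate is provided by the Corollary to Theorem \ref{Theorem:fractional AT} and which I extend to all $q\ge 4$ by enlarging $C$ if necessary, applied with $q=2k$ and combined with Stirling's bound $(2k)^k/k!\le(2e)^k$ yields
\[
\sum_{k\ge 2}\frac{\alpha^k}{k!}\|u_0\|_{L^{2k}}^{2k}\le \frac{S}{T}\sum_{k\ge 2}(2eC^2\alpha T)^k\le C'\alpha^2 ST,
\]
valid for $\alpha<1/(4eC^2)$, with $C':=8e^2C^4$. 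Dividing $\alpha T\le C'\alpha^2 ST$ by $\alpha T>0$ produces $1\le C'\alpha S\le C'\alpha$, which fails once $\alpha<1/C'$; this is the desired contradiction.

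The main technical point is the uniformity of the constant in the Gagliardo--Nirenberg inequality: I need $\|u\|_{L^q}\le Cq^{1/2}\|(-\Delta)^{1/4}u\|_{L^2}^{1-2/q}\|u\|_{L^2}^{2/q}$ with a single $C$ valid for every $q\ge 4$. The Corollary secures the correct $q^{1/2}$ growth as $q\to\infty$, so the tail of the series is controlled automatically, and only finitely many small exponents need to be absorbed into $C$ via the embedding $H^{1/2,2}(\re)\hookrightarrow L^q(\re)$ for each fixed $q$.
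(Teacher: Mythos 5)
Your argument is correct and reaches the same conclusion as the paper, but by a genuinely different route. The paper follows Ishiwata's scheme: it shows more strongly that $J_\alpha(u)=\int_\re(e^{\alpha u^2}-1)\,dx$ has \emph{no critical point at all} on the sphere $\{\|u\|_{H^{1/2,2}}=1\}$ for small $\alpha$, by computing $\frac{d}{d\tau}\big|_{\tau=1}J_\alpha(w_\tau)$ along the scaling orbit $w_\tau=v_\tau/\|v_\tau\|_{H^{1/2,2}}$, $v_\tau(x)=\sqrt{\tau}\,v(\tau x)$, and showing this derivative is strictly negative. You instead prove the explicit lower bound $B(\alpha)\ge\alpha$ (the value carried by vanishing sequences, which is precisely the $\tau\to 0^+$ limit of the paper's orbit), Taylor-expand the exponential at a would-be maximizer, and derive $\alpha T\le C'\alpha^2 ST$, hence a contradiction for $\alpha$ small. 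The two proofs are dual in spirit (local derivative versus global comparison against the vanishing limit), both hinge on the same Gagliardo--Nirenberg estimate with the $q^{q/2}$ growth rate, and the paper's version gives the slightly stronger non-criticality conclusion while yours makes the mechanism $B(\alpha)\ge\alpha$ explicit.

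One point to tighten: you invoke the Corollary (the $\limsup_{q\to\infty}$ statement for $\beta_0$) and then say the finitely many small exponents can be absorbed ``via the embedding $H^{1/2,2}\hookrightarrow L^q$.'' The plain embedding $\|u\|_{L^q}\le C_q\|u\|_{H^{1/2,2}}$ does not produce the factorization $\|(-\Delta)^{1/4}u\|_{L^2}^{q-2}\|u\|_{L^2}^2$ that your series manipulation requires (you need the factor $T^{k-1}S$, not a power of the full norm, in order to pull out one $T$ and one $S$ and be left with a series in $\alpha T$). The cleanest fix is simply to cite Lemma~\ref{Lemma:Ogawa-Ozawa} (Ogawa--Ozawa), which the paper already records and uses for exactly this purpose: it gives a single constant $C$ with $\|u\|_{L^p}^p\le C\,p^{p/2}\|(-\Delta)^{1/4}u\|_{L^2}^{p-2}\|u\|_{L^2}^2$ uniformly for all $p\ge 2$, making your $C^q$ and the absorption step unnecessary. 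You should also spell out the (standard, one-line) observation that a maximizer must saturate $\|u_0\|_{H^{1/2,2}}=1$, since $t\mapsto e^{\alpha t^2}-1$ is strictly increasing in $|t|$.
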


It is plausible that there exists $\alpha_* > 0$ such that $B(\alpha)$ is attained for $\alpha_* < \alpha \le \pi$,
but we do not have a proof up to now.

Finally, we improve the subcritical Adachi-Tanaka type inequality along the line of Dong-Lu \cite{Dong-Lu}:

\begin{theorem}
\label{Theorem:Dong-Lu}
For $\alpha > 0$, set
\begin{equation}
\label{Ealpha}
	E(\alpha) = \sup_{u \in H^{1/2,2}(\re) \setminus \{0\} \atop \| (-\Delta)^{1/4} u \|_{L^2(\re)} \le 1} \frac{1}{\| u \|_{L^2(\re)}^2} \int_{\re} u^2 e^{\alpha u^2} dx.
\end{equation}
Then we have
\[
	E(\alpha) 
	\begin{cases}
	&< \infty, \quad \alpha \, < \, \pi, \\
	&= \infty, \quad \alpha \ge \pi.
	\end{cases}
\]
Furthermore, $E(\alpha)$ is attained for all $\alpha \in (0, \pi)$.
\end{theorem}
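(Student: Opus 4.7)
The statement has three parts; I would handle them separately.

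For \emph{finiteness} at $\alpha<\pi$, I would use the pointwise inequality $te^{\alpha t}\le C_\delta\bigl(e^{(\alpha+\delta)t}-1\bigr)$ for all $t\ge0$, valid for any $\delta>0$ since the ratio is bounded near $t=0$ (both sides vanish linearly) and decays like $e^{-\delta t}$ as $t\to\infty$. Applied with $t=u^2$ and $\delta$ chosen so $\alpha+\delta<\pi$, this yields $E(\alpha)\le C_\delta\, A(\alpha+\delta)<\infty$ by Theorem~\ref{Theorem:fractional AT}. For \emph{divergence} at $\alpha\ge\pi$, the elementary bound $e^x-1\le xe^x$ applied with $x=\alpha u^2$ gives $A(\alpha)\le\alpha\, E(\alpha)$, and the left-hand side is infinite by Theorem~\ref{Theorem:fractional AT}.

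For \emph{attainability} at $\alpha\in(0,\pi)$, I would follow the scheme of Dong--Lu. Take a maximizing sequence $\{u_n\}$ and normalize $\|(-\Delta)^{1/4}u_n\|_{L^2(\re)}=\|u_n\|_{L^2(\re)}=1$: the first by saturating the constraint (which only increases the functional), the second by the scale invariance $u\mapsto u(\lambda\cdot)$, which preserves both $\|(-\Delta)^{1/4}u\|_{L^2}^2$ and the ratio defining $E(\alpha)$. Then replace $u_n$ by its symmetric decreasing rearrangement, which preserves $\|u_n\|_{L^2}^2$ and $\int u_n^2 e^{\alpha u_n^2}\,dx$ by equimeasurability, and does not increase $\|(-\Delta)^{1/4}u_n\|_{L^2}$ by the fractional P\'olya--Szeg\H{o} inequality of Frank--Seiringer type. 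Extract $u_n\weakto u$ in $H^{1/2,2}(\re)$; Rellich compactness yields $u_n\to u$ in $L^q_{\mathrm{loc}}(\re)$ and a.e., while the Strauss-type radial bound $|u_n(x)|\le(2|x|)^{-1/2}$ upgrades this to strong $L^q(\re)$-convergence for every $q>2$.

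The heart of the argument, and its main obstacle, is the series expansion
\[
\int_\re u_n^2 e^{\alpha u_n^2}\,dx=\|u_n\|_{L^2}^2+\sum_{k\ge1}\frac{\alpha^k}{k!}\|u_n\|_{L^{2k+2}}^{2k+2},
\]
together with a uniform tail estimate: choosing $\delta>0$ so that $\alpha+\delta<\pi$, the analogous series with exponent $\alpha+\delta$ is bounded by $E(\alpha+\delta)\|u_n\|_{L^2}^2$, which is finite by the first part; comparing term by term with a geometric series of ratio $\alpha/(\alpha+\delta)<1$ gives uniform smallness of the tail beyond index $K$. If $u\equiv0$, every term with $k\ge1$ vanishes as $n\to\infty$, whence $E(\alpha)=1$, contradicting $E(\alpha)>1$ (test with any nonzero admissible $\phi$, where $e^{\alpha\phi^2}>1$ on $\{\phi\ne0\}$). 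With $u\not\equiv0$, the same uniform tail estimate and strong $L^q$-convergence for $q>2$ let one pass to the limit term by term, producing $E(\alpha)=\int u^2 e^{\alpha u^2}\,dx+(1-\|u\|_{L^2}^2)$. Setting $t=\|u\|_{L^2}^2\in(0,1]$ (weak lower semicontinuity) and $I=\int u^2 e^{\alpha u^2}\,dx$, feasibility of $u$ gives $I/t\le E(\alpha)=I+(1-t)$, equivalently $(I-t)(1-t)\le0$; since $I>t$ (strict, by $u\not\equiv0$ and $e^{\alpha u^2}>1$ on $\{u\ne0\}$), the only possibility is $t=1$ and $I=E(\alpha)$, so $u$ is a maximizer. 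The crux throughout is this uniform series-tail estimate, which is precisely where the subcritical hypothesis $\alpha<\pi$ enters, via the finiteness at the enlarged exponent $\alpha+\delta$.
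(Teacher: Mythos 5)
Your argument is correct, and it reaches the same conclusion by a genuinely different route in two places. For finiteness when $\alpha<\pi$, the paper first proves an auxiliary critical-norm statement (Proposition~\ref{Prop:Fbeta}: $F(\beta)=\sup_{\|u\|_{H^{1/2,2}}\le1}\int_\re u^2e^{\beta u^2}dx<\infty$ for $\beta<\pi$) by splitting $\re$ into $(-1/2,1/2)$ and its complement, using the radial lemma outside and a truncation plus the interval Trudinger--Moser inequality (Proposition~\ref{Prop:fractional TM}) inside, and then transfers to $E(\alpha)$ by the scaling argument of Lemma~\ref{Lemma2}. Your pointwise bound $te^{\alpha t}\le C_\delta\bigl(e^{(\alpha+\delta)t}-1\bigr)$ bypasses $F(\beta)$ entirely and reduces directly to the already-established finiteness of $A(\alpha+\delta)$ from Theorem~\ref{Theorem:fractional AT}; this is shorter and self-contained within the results of Section~2, though it does not yield the separate $F(\beta)$ statement, which is of independent interest. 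Your divergence argument via $A(\alpha)\le\alpha E(\alpha)$ is the same elementary comparison the paper notes after the theorem. For attainability, the paper appeals to the analogue of Proposition~\ref{Porp:Ishiwata-Nakamura-Wadade:Lemma3.1}, i.e., a mean-value-theorem plus H\"older argument to pass to the limit in $\int\(e^{\alpha u_n^2}-1-\alpha u_n^2\)dx$; you instead expand the integrand as a power series and control the tail uniformly by comparing with the series at exponent $\alpha+\delta$ (bounded by $E(\alpha+\delta)$), then pass to the limit term by term via the strong $L^{2k+2}$ convergence from Proposition~\ref{prop:compactness}. Both mechanisms achieve the same convergence; the series approach is closer in spirit to the Dong--Lu argument the theorem is named for, while the H\"older route matches the proof already written for Theorem~\ref{Theorem:Attainability} and so is what the paper has in mind when it says ``argue as in the proof of Theorem~\ref{Theorem:Attainability}.'' Your concluding algebra, using $I/t\le E(\alpha)=I+(1-t)$ together with $I>t$ to force $t=1$, is a correct and slightly more explicit rendering of the paper's final monotonicity step.
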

Since $e^{\alpha t^2} - 1 \le \alpha t^2 e^{\alpha t^2}$ for $t \in \re$, Theorem \ref{Theorem:Dong-Lu} extends Theorem \ref{Theorem:fractional AT}. 
In the classical case, Dong-Lu used a rearrangement technique to reduce the problem to one-dimension and obtained the similar inequality by estimating a one-dimensional integral.
The method is similar to \cite{Carleson-Chang}.
In the fractional setting $H^{1/2,2}$, we cannot follow this argument and we need a new idea.

The organization of the paper is as follows:
In  section 2, we prove Theorem \ref{Theorem:fractional AT}, \ref{Theorem:relation}, and \ref{Theorem:asymptotic}.
In  section 3, we prove Theorem \ref{Theorem:Attainability} and \ref{Theorem:Non-attainability}.
In  section 4, we prove Theorem \ref{Theorem:Dong-Lu}.

%
%

\section{Proof of Theorem \ref{Theorem:fractional AT}, \ref{Theorem:relation}, and \ref{Theorem:asymptotic}}

For the proofs of Theorem \ref{Theorem:fractional AT}, \ref{Theorem:relation}, and \ref{Theorem:asymptotic}, we prepare several lemmas.
\begin{lemma}
\label{Lemma1}
Set
\begin{equation}
\label{A_tilde}
	\tilde{A}(\alpha) 
	= \sup_{{u \in H^{1/2,2}(\re) \setminus \{ 0 \} \atop \| (-\Delta)^{1/4} u \|_{L^2(\re)} \le 1} \atop \| u \|_{L^2(\re)} = 1} 
	\int_{\re} \( e^{\alpha u^2} - 1 \) dx.
\end{equation}
Then $\tilde{A}(\alpha) = A(\alpha)$ for any $\alpha > 0$. 
\end{lemma}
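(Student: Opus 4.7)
The plan is to prove the two inequalities $\tilde{A}(\alpha)\le A(\alpha)$ and $A(\alpha)\le \tilde{A}(\alpha)$ separately. The first direction is immediate: any $u$ admissible in the supremum defining $\tilde{A}(\alpha)$ satisfies $\|u\|_{L^2(\re)}=1$, so the multiplicative factor $1/\|u\|_{L^2(\re)}^2$ in $A(\alpha)$ equals $1$, and $u$ is also admissible in the supremum defining $A(\alpha)$. Taking the supremum yields $\tilde{A}(\alpha)\le A(\alpha)$.

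For the reverse inequality I would exploit the scale invariance of the fractional Dirichlet seminorm on $\re$. Given $u\in H^{1/2,2}(\re)\setminus\{0\}$ with $\|(-\Delta)^{1/4}u\|_{L^2(\re)}\le 1$, set $u_\lambda(x):=u(\lambda x)$ for $\lambda>0$. A change of variables in the Gagliardo double integral gives $[u_\lambda]_{W^{1/2,2}(\re)}^2=[u]_{W^{1/2,2}(\re)}^2$, and combined with the identity $\|(-\Delta)^{1/4}v\|_{L^2(\re)}^2=(2\pi)^{-1}[v]_{W^{1/2,2}(\re)}^2$ recalled in the introduction, this yields $\|(-\Delta)^{1/4}u_\lambda\|_{L^2(\re)}=\|(-\Delta)^{1/4}u\|_{L^2(\re)}\le 1$. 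On the other hand, $\|u_\lambda\|_{L^2(\re)}^2=\lambda^{-1}\|u\|_{L^2(\re)}^2$, so choosing $\lambda=\|u\|_{L^2(\re)}^2$ makes $u_\lambda$ admissible in the supremum defining $\tilde{A}(\alpha)$. A further change of variables then gives
\[
\int_\re(e^{\alpha u_\lambda^2}-1)\,dx=\lambda^{-1}\int_\re(e^{\alpha u^2}-1)\,dx=\frac{1}{\|u\|_{L^2(\re)}^2}\int_\re(e^{\alpha u^2}-1)\,dx,
\]
so the Adachi-Tanaka quotient at the arbitrary competitor $u$ equals the value of the plain integral at the admissible $u_\lambda$. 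Taking the supremum yields $A(\alpha)\le\tilde{A}(\alpha)$.

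There is essentially no obstacle here: the entire argument is a reduction relying only on the fact that the constraint $\|(-\Delta)^{1/4}u\|_{L^2(\re)}\le 1$, unlike the full Sobolev norm, is invariant under $u\mapsto u_\lambda$. The only point worth emphasizing is that the same reduction is specific to the subcritical Adachi-Tanaka setting and would fail if one attempted it for the Li-Ruf supremum $B(\alpha)$ of Proposition \ref{Prop:fractional Li-Ruf}, since $\|u\|_{H^{1/2,2}(\re)}$ is not invariant under the dilation.
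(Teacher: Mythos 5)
Your proof is correct and follows essentially the same approach as the paper: both use the dilation $u_\lambda(x)=u(\lambda x)$ with $\lambda=\|u\|_{L^2(\re)}^2$, the scale invariance of $\|(-\Delta)^{1/4}\cdot\|_{L^2(\re)}$ established via a change of variables in the Gagliardo seminorm, and the identity $\int_\re(e^{\alpha u_\lambda^2}-1)\,dx=\lambda^{-1}\int_\re(e^{\alpha u^2}-1)\,dx$ to reduce the general competitor to one with $\|u_\lambda\|_{L^2(\re)}=1$. The remark contrasting with the non-invariance of the full $H^{1/2,2}$-norm is a sensible aside, though not needed for the lemma itself.
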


\begin{proof}
For any	$u \in H^{1/2,2}(\re) \setminus \{ 0 \}$ and $\la >0$, we put $u_{\la}(x) = u(\la x)$ for $x \in \re$.
Then we have
\begin{equation}
\label{scaling}
	\begin{cases}
	&\| (-\Delta)^{1/4} u_{\la} \|_{L^2(\re)} = \| (-\Delta)^{1/4} u \|_{L^2(\re)}, \\ 
	&\| u_{\la} \|^2_{L^2(\re)} = \la^{-1} \| u \|^2_{L^2(\re)},
	\end{cases}
\end{equation}
since 
\begin{align*}
	2\pi \| (-\Delta)^{1/4} u_{\la} \|_{L^2(\re)}^2 &= [u_{\la}]_{W^{1/2,2}(\re)}^2 \\
	&= \int_{\re} \int_{\re} \frac{|u(\la x)-u(\la y)|^2}{|x-y|^2} dxdy \\
	&= \int_{\re} \int_{\re} \frac{|u(\la x)-u(\la y)|^2}{|\la x- \la y|^2} d(\la x)d(\la y) \\
	&= [u]_{W^{1/2,2}(\re)}^2 = 2\pi \| (-\Delta)^{1/4} u \|_{L^2(\re)}^2.
\end{align*}
Thus for any $u \in H^{1/2,2}(\re) \setminus \{ 0 \}$ with $\| (-\Delta)^{1/4} u \|_{L^2(\re)} \le 1$,
if we choose $\la = \| u \|^2_{L^2(\re)}$, then $u_{\la} \in H^{1/2,2}(\re)$ satisfies 
\[
	\| (-\Delta)^{1/4} u_{\la} \|_{L^2(\re)} \le 1  \quad \text{and} \quad \| u_{\la} \|^2_{L^2(\re)} = 1.
\]
Thus 
\[
	\frac{1}{\| u \|_{L^2(\re)}^2} \int_{\re} \( e^{\alpha u^2} - 1 \) dx = \int_{\re} \( e^{\alpha u_{\la}^2} - 1 \) dx 
	\le \tilde{A}(\alpha),
\] 
which implies $A(\alpha) \le \tilde{A}(\alpha)$.
The opposite inequality is trivial.
\end{proof}

\begin{lemma}
\label{Lemma2}
For any $0 < \alpha < \pi$, it holds
\[
	A(\alpha) \le \frac{\(\alpha / \pi\)}{1 - \(\alpha / \pi \)} B(\pi).
\]
\end{lemma}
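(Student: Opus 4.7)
The plan is to exploit the scaling relations in (\ref{scaling}) to convert a near-maximizer of $A(\alpha)$ into an admissible competitor for $B(\pi)$. First I would apply Lemma \ref{Lemma1} to restrict attention to $u \in H^{1/2,2}(\re)$ with $\| (-\Delta)^{1/4} u \|_{L^2(\re)} \le 1$ and $\| u \|_{L^2(\re)} = 1$, so that
\[
	\tilde A(\alpha) = \sup \int_{\re} (e^{\alpha u^2}-1)\,dx
\]
with the supremum over this normalized class. For such $u$, I would consider the two-parameter test function $v(x) = c\, u(\la x)$, where $c,\la>0$ are to be chosen, and compute, using (\ref{scaling}),
\[
	\| (-\Delta)^{1/4} v \|_{L^2(\re)}^2 + \| v \|_{L^2(\re)}^2 \le c^2 + \frac{c^2}{\la}.
\]

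Next I would match parameters to make $v$ admissible for $B(\pi)$ and recover exactly $\alpha u^2$ in the exponent. The condition that $\| v \|_{H^{1/2,2}(\re)} \le 1$ suggests taking
\[
	c^2 + \frac{c^2}{\la} = 1,
\]
while the condition $\pi c^2 = \alpha$ forces $c^2 = \alpha/\pi$. Solving the two equations gives $\la = \frac{\alpha/\pi}{1-\alpha/\pi}$, which is positive precisely because $\alpha < \pi$. With this choice, $v$ is admissible for $B(\pi)$ and a change of variable $y = \la x$ gives
\[
	B(\pi) \ge \int_{\re}(e^{\pi v^2}-1)\,dx = \frac{1}{\la}\int_{\re}(e^{\alpha u^2}-1)\,dx = \frac{1-\alpha/\pi}{\alpha/\pi}\int_{\re}(e^{\alpha u^2}-1)\,dx.
\]

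Taking the supremum over all normalized $u$ then yields
\[
	\frac{1-\alpha/\pi}{\alpha/\pi}\,\tilde A(\alpha) \le B(\pi),
\]
and Lemma \ref{Lemma1} lets me replace $\tilde A(\alpha)$ by $A(\alpha)$, completing the proof. There is really no hard step here: the only slightly delicate point is the simultaneous calibration of $c$ (to match the exponents) and $\la$ (to make $v$ admissible for the full-norm inequality), and the observation that the resulting $\la$ is positive exactly in the subcritical range $\alpha<\pi$, which is precisely the hypothesis.
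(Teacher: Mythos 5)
Your proposal is correct and takes essentially the same approach as the paper: normalize via Lemma~\ref{Lemma1}, set $v(x) = C u(\lambda x)$ with $C^2 = \alpha/\pi$ and $\lambda = \frac{\alpha/\pi}{1-\alpha/\pi}$ so that $v$ is admissible for $B(\pi)$ and the exponent matches, then change variables. The computations and parameter choices match the paper's exactly.
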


\begin{proof}
Choose any $u \in H^{1/2,2}(\re)$ with $\| (-\Delta)^{1/4} u \|_{L^2(\re)} \le 1$ and  $\| u \|_{L^2(\re)} = 1$. 
Put $v(x) = C u(\la x)$ where $C^2 = \alpha/\pi \in (0,1)$ and $\la = \frac{C^2}{1 - C^2}$.
Then by scaling rules (\ref{scaling}), we see
\begin{align*}
	\| v \|^2_{H^{1/2,2}(\re)} &= \| (-\Delta)^{1/4} v \|^2_{L^2(\re)} + \| v \|^2_{L^2(\re)} \\
	&= C^2 \| (-\Delta)^{1/4} u \|^2_{L^2(\re)} + \la^{-1} C^2 \| u \|^2_{L^2(\re)} \\
	&\le C^2 + \la^{-1} C^2 = 1.
\end{align*}
Also we have
\begin{align*}
	\int_{\re} \( e^{\pi v^2} - 1 \) dx &= \int_{\re} \( e^{\pi C^2 u^2(\la x)} - 1 \) dx \\ 
	&= \la^{-1} \int_{\re} \( e^{\pi C^2 u^2(y)} - 1 \) dy \\ 
	&= \frac{1 - C^2}{C^2} \int_{\re} \( e^{\alpha u^2(y)} - 1 \) dy \\ 
	&= \frac{1 - \(\alpha / \pi \)}{\(\alpha / \pi\)} \int_{\re} \( e^{\alpha u^2(y)} - 1 \) dy. 
\end{align*}
Thus testing $B(\pi)$ by $v$, we see
\[
	B(\pi) \ge \int_{\re} \( e^{\pi v^2} - 1 \) dx \ge \frac{1 - \(\alpha / \pi \)}{\(\alpha / \pi\)} \int_{\re} \( e^{\alpha u^2(y)} - 1 \) dy. 
\]
By taking the supremum for $u \in H^{1/2,2}(\re)$ with $\| (-\Delta)^{1/4} u \|_{L^2(\re)} \le 1$ and  $\| u \|_{L^2(\re)} = 1$,
we have
\[
	B(\pi) \ge \frac{1 - \(\alpha / \pi \)}{\(\alpha / \pi\)} \tilde{A}(\alpha).
\]
Finally, Lemma \ref{Lemma1} implies the result.
\end{proof}

\vspace{1em}\noindent
{\it Proof of Theorem \ref{Theorem:fractional AT}}: 
The assertion that $A(\alpha) < \infty$ for $\alpha < \pi$ follows from Lemma \ref{Lemma2} and the fact $B(\pi) < \infty$ by Proposition \ref{Prop:fractional Li-Ruf}.

For the proof of $A(\pi) = \infty$, we use the Moser sequence 
\begin{align}
\label{Moser sequence}
	&u_{\eps} = 
	\begin{cases}
	\( \log  (1/\eps) \)^{1/2}, &\quad \text{if} \, |x| < \eps, \\
	\frac{\log (1/|x|)}{\( \log (1/\eps) \)^{1/2}}, &\quad \text{if} \, \eps < |x| < 1, \\
	0, &\quad \text{if} \, 1 \le |x|,
	\end{cases}
\end{align}
and its estimates
\begin{align}
\label{Moser_estimates(1)}
	&\| (-\Delta)^{1/4} u_{\eps} \|^2_{L^2(\re)} = \pi + o(1), \\ 
\label{Moser_estimates(2)}
	&\| (-\Delta)^{1/4} u_{\eps} \|^2_{L^2(\re)} \le \pi \( 1 + (C \log (1/\eps) )^{-1} \), \\
\label{Moser_estimates(3)}
	&\| u_{\eps} \|^2_{L^2(\re)} = O\( \( \log (1/\eps) \)^{-1} \)
\end{align}
as $\eps \to 0$ for some $C > 0$.
Note $u_{\eps} \in \tilde{W}^{1/2,2}((-1,1)) \subset W^{1/2,2}(\re) = H^{1/2,2}(\re)$.
For the estimate (\ref{Moser_estimates(1)}), we refer to Iula \cite{Iula} Proposition 2.2.
For the estimate (\ref{Moser_estimates(2)}), we refer to \cite{Iula} equation (35).
Actually, after a careful look of the proof of Proposition 2.2 in \cite{Iula}, we confirm that 
\[
	\lim_{\eps \to 0} \( \log (1/\eps) \) \( \| (-\Delta)^{1/4} u_{\eps} \|^2_{L^2(\re)} - \pi \)  \le C
\]
for a positive $C > 0$, which implies (\ref{Moser_estimates(2)}).
For (\ref{Moser_estimates(3)}), we compute
\begin{align*}
	\| u_{\eps} \|_{L^2(\re)}^2 &= \int_{|x| \le \eps} \(\log (1/\eps) \) dx 
	+ \int_{\eps < |x| \le 1} \( \frac{\log (1/|x|)}{\( \log (1/\eps) \)^{1/2}} \)^2 dx \\
	&= 2\eps \log (1/\eps) + \frac{2}{\log (1/\eps)} \int_{\log (1/\eps)}^0 t^2 (-e^t) dx \\ 
	&= 2\eps \log (1/\eps) + \frac{2}{\log (1/\eps)} \( \Gamma(3) + o(1) \)
\end{align*}
as $\eps \to 0$.
Thus we obtain (\ref{Moser_estimates(3)}).

By testing $A(\pi)$ by $v_{\eps} = u_{\eps} /\| (-\Delta)^{1/4} u_{\eps} \|_{L^2(\re)}$, we have
\begin{align*}
	A(\pi) &\ge \frac{1}{\| v_{\eps} \|_{L^2(\re)}^2} \int_{\re} \( e^{\pi v_{\eps}^2} - 1 \) dx \\
	&\ge \frac{\| (-\Delta)^{1/4} u_{\eps} \|^2_{L^2(\re)}}{\| u_{\eps} \|_{L^2(\re)}^2} \int_{|x| \le \eps} \( e^{\pi v_{\eps}^2} - 1 \) dx \\
	&\ge \frac{\| (-\Delta)^{1/4} u_{\eps} \|^2_{L^2(\re)}}{\| u_{\eps} \|_{L^2(\re)}^2} \eps \exp \( \pi  \frac{\log (1/\eps)}{\| (-\Delta)^{1/4} u_{\eps} \|^2_{L^2(\re)}} \) \\
	&\ge \frac{\| (-\Delta)^{1/4} u_{\eps} \|^2_{L^2(\re)}}{\| u_{\eps} \|_{L^2(\re)}^2} 
	\eps \exp \( \frac{\log (1/\eps)}{1 + (C \log (1/\eps) )^{-1}} \)
\end{align*}
since $e^t -1 \ge (1/2) e^t$ for $t$ large and (\ref{Moser_estimates(2)}).
Also since
\[
	\frac{t}{1 + \frac{1}{Ct}} - t = \frac{-1/C}{1 + \frac{1}{Ct}} \to -\frac{1}{C} \quad \text{as} \, t \to \infty,
\]
we see $\frac{t}{1 + \frac{1}{Ct}} = t - 1/C + o(1)$ as $t \to \infty$.
Put $t = \log (1/\eps)$, we see
\begin{align*}
	\exp \( \frac{\log (1/\eps)}{1 + (C \log (1/\eps) )^{-1}} \) = \exp \( \log (1/\eps) - 1/C +o(1) \) =  (1/\eps )e^{-1/C + o(1)}, 
\end{align*}
which leads to
\[
	\eps \exp \( \frac{\log (1/\eps)}{1 + (C \log (1/\eps) )^{-1}} \) \ge e^{-1/C + o(1)} \ge \delta > 0
\]
for some $\delta > 0$ independent of $\eps \to 0$.
Therefore, by (\ref{Moser_estimates(1)}), (\ref{Moser_estimates(2)}), (\ref{Moser_estimates(3)}), we have for $\delta' > 0$ 
\begin{align*}
	A(\pi) \ge \frac{\pi + o(1)}{(C \log (1/\eps)))^{-1}} \delta \ge \delta' \( \log (1/\eps) \) \to \infty
\end{align*}
as $\eps \to 0$.
This proves $A(\pi) = \infty$.
\qed

\vspace{1em}\noindent
{\it Proof of Theorem \ref{Theorem:relation}}: 
By Lemma \ref{Lemma2}, we have
\[
	B(\pi) \ge \sup_{\alpha \in (0, \pi)} \frac{1 - \(\alpha / \pi \)}{\(\alpha / \pi\)} A(\alpha).
\]
Let us prove the opposite inequality.
Let $\{ u_n \} \subset H^{1/2,2}(\re)$, $u_n \ne 0$, $\| (-\Delta)^{1/4} u_n \|^2_{L^2(\re)} + \| u_n \|^2_{L^2(\re)} \le 1$,
be a maximizing sequence of $B(\pi)$.
We may assume $\| (-\Delta)^{1/4} u_n \|^2_{L^2(\re)} < 1$ for any $n \in \N$.
Put
\[
	\begin{cases}
	&v_n(x) = \frac{u_n(\la_n x)}{\| (-\Delta)^{1/4} u_n \|_{L^2(\re)}}, \quad (x \in \re) \\
	&\la_n = \frac{1 - \| (-\Delta)^{1/4} u_n \|^2_{L^2(\re)}}{\| (-\Delta)^{1/4} u_n \|^2_{L^2(\re)}} > 0.
	\end{cases}
\]
Thus by (\ref{scaling}), we see
\begin{align*}
	&\| (-\Delta)^{1/4} v_n \|^2_{L^2(\re)} = 1, \\
	&\|  v_n \|^2_{L^2(\re)} = \frac{\la_n^{-1}}{\| (-\Delta)^{1/4} u_n \|^2_{L^2(\re)}} \| u_n \|^2_{L^2(\re)}
	= \frac {\| u_n \|^2_{L^2(\re)}}{1 - \| (-\Delta)^{1/4} u_n \|^2_{L^2(\re)}} \le 1,
\end{align*}
since $\| (-\Delta)^{1/4} u_n \|^2_{L^2(\re)} + \| u_n \|^2_{L^2(\re)} \le 1$.
Thus, setting  $\alpha_n = \pi \| (-\Delta)^{1/4} u_n \|^2_{L^2(\re)} < \pi$ for any $n \in \N$,
we may test $A(\alpha_n)$ by $\{ v_n \}$, which results in
\begin{align*}
	B(\pi) + o(1) &= \int_{\re} \( e^{\pi u_n^2(y)} - 1 \) dy \\ 
	&= \la_n \int_{\re} \( e^{\pi \| (-\Delta)^{1/4} u_n \|^2_{L^2(\re)} v_n^2(x)} - 1 \) dx \\ 
	&\le \la_n \frac{1}{\| v_n \|^2_{L^2(\re)}} \int_{\re} \( e^{\alpha_n  v_n^2(x)} - 1 \) dx \\ 
	&\le \la_n A(\alpha_n) = \frac{1 - \(\alpha_n / \pi \)}{\(\alpha_n / \pi\)} A(\alpha_n) \\
	&\le \sup_{\alpha \in (0, \pi)} \frac{1 - \(\alpha / \pi \)}{\(\alpha / \pi\)} A(\alpha).
\end{align*}
Here we have used a change of variables $y = \la_n x$ for the second equality,  and $\| v_n \|^2_{L^2(\re)} \le 1$ for the first inequality.
Letting $n \to \infty$, we have the desired result. 
\qed

\vspace{1em}\noindent
{\it Proof of Theorem \ref{Theorem:asymptotic}}:  

We need to prove that there exists $C_1 > 0$ such that for any $\alpha < \pi$ which is sufficiently close to $\pi$,
it holds that
\[
	A(\alpha) \ge \frac{C_1}{1 - \alpha / \pi}.
\]
Again we use the Moser sequence (\ref{Moser sequence}) and we test $A(\alpha)$ by $v_{\eps} = u_{\eps} /\| (-\Delta)^{1/4} u_{\eps} \|_{L^2(\re)}$.
As in the similar calculations in the proof of Theorem \ref{Theorem:fractional AT}, we have
\begin{align*}
	A(\alpha) &\ge \frac{1}{\| v_{\eps} \|_{L^2(\re)}^2} \int_{\re} \( e^{\alpha v_{\eps}^2} - 1 \) dx \\
	&\ge \frac{(1/2)}{\| v_{\eps} \|_{L^2(\re)}^2} \int_{|x| \le \eps} e^{\alpha v_{\eps}^2} dx \\
	&\ge C \eps \( \log (1/\eps) \) \exp \( \frac{\alpha}{\pi} \frac{\log (1/\eps)}{1 + (C \log (1/\eps))^{-1}} \) \\
	&= C \eps \( \log (1/\eps) \) \exp \( \delta_{\eps} \log (1/\eps) \)
\end{align*}
where we put $\delta_{\eps} =  (\frac{\alpha}{\pi}) \frac{1}{1 + (C \log (1/\eps))^{-1}} \in (0,1)$.

Now, for $\alpha < \pi$ which is sufficiently close to $\pi$, we fix $\eps > 0$ small such that
\begin{equation}
\label{log eps}
	\frac{1}{1 - \alpha / \pi} \le \log (1/\eps) \le \frac{2}{1 - \alpha / \pi},
\end{equation}
which implies
\[
	\exp \( -\frac{2}{1-\alpha/\pi} \) \le \eps \le \exp \( -\frac{1}{1-\alpha/\pi} \).
\]
With this choice of $\eps >0$, we have
\begin{align}
	A(\alpha) &\ge C \eps \( \log (1/\eps) \) \exp \( \delta_{\eps} \log (1/\eps) \) \notag \\
\label{A lower}
	&= C \eps \(\log (1/\eps)\) (1/\eps)^{\delta_{\eps}} = C \eps^{1 - \delta_{\eps}} \(\log (1/\eps)\).
\end{align}
Now, we estimate that
\begin{align*}
	\eps^{1 - \delta_{\eps}} &\ge \( \exp \( -\frac{2}{1-\alpha/\pi} \) \)^{1-\delta_{\eps}} 
	= \exp \( -\frac{2}{1-\alpha/\pi} (1-\delta_{\eps}) \) \\ 
	&= \exp \( -\(\frac{2}{1-\alpha/\pi}\) \left\{ (1 - \alpha/\pi) + (\alpha/\pi) \( 1 - \frac{1}{1 + (C \log 1/\eps )^{-1}} \) \right\} \) \\ 
	&= \exp \( -2 - \(\frac{2(\alpha/\pi)}{1-\alpha/\pi}\) \( \frac{1}{1 + C \log 1/\eps} \) \) \\ 
	&\ge \exp \( -2 - \(\frac{2(\alpha/\pi)}{1-\alpha/\pi}\) \( \frac{1}{1 + \frac{C}{1 - \alpha/\pi}} \) \) \\ 
	&= e^{-2} \cdot e^{- \frac{2(\alpha/\pi)}{C + 1 - \alpha/\pi}} = e^{-2} \cdot e^{-f(\alpha/\pi)}
\end{align*}
where $f(t) = \frac{2t}{C + 1 - t}$ for $t \in [0,1]$ and we have used (\ref{log eps}) in the last inequality.
We easily see that $f(0) = 0$, $f^{\prime}(t) = \frac{2(C+1)}{(C + 1 - t)^2} > 0$ for $t > 0$, thus $f(t)$ is strictly increasing in $t$ and
$\max_{t \in [0,1]} f(t) = f(1) = 2/C$.
Thus we have
\begin{align*}
	\eps^{1 - \delta_{\eps}} \ge e^{-2} \cdot e^{-2/C} =: C_0
\end{align*}
which is independent of $\alpha$.
Backing to (\ref{A lower}) with (\ref{log eps}), we observe that
\begin{align*}
	A(\alpha) \ge C \eps^{1 - \delta_{\eps}} \(\log (1/\eps) \) \ge C C_0 \(\log (1/\eps) \) \ge \frac{C C_0}{1 - \alpha/\pi}
\end{align*}
which proves the result.
\qed

\section{Proof of Theorem \ref{Theorem:Attainability} and \ref{Theorem:Non-attainability}}

For $u \in H^{1/2,2}(\re)$, $u^{*}$ will denote its symmetric decreasing rearrangement defined as follows:
For a measurable set $A \subset \re$, let $A^{*}$ denote an open interval $A^{*} = (-|A|/2, |A|/2)$.
We define $u^{*}$ by
\[
	u^{*}(x) = \int_0^{\infty} \chi_{\{ y \in \re : |u(y)| > t \}^{*}}(x) dt
\]
where $\chi_A$ denote the indicator function of a measurable set $A \subset \re$.
Note that $u^*$ is nonnegative, even, and decreasing on the positive line $\re_+ = [0, +\infty)$.
It is known that
\begin{equation}
	\int_{\re} F(u^*) dx = \int_{\re} F(|u|) dx
\end{equation}
for any nonnegative measurable function $F: \re_{+} \to \re_{+}$, which is the difference of two monotone increasing functions $F_1, F_2$ with $F_1(0) = F_2(0) = 0$
such that either $F_1 \circ |u|$ or $F_2 \circ |u|$ is integrable.
Also the inequality of P\'olya-Szeg\"o type
\[
	\int_{\re} |(-\Delta u^*)^{1/4}|^2 dx \le \int_{\re} |(-\Delta u)^{1/4}|^2 dx
\]
holds true for $u \in H^{1/2,2}(\re)$, see for example, \cite{Almgren-Lieb} and \cite{Lieb-Loss}.


\begin{remark}
Note that Radial Compactness Lemma by Strauss \cite{Strauss} is violated on $\re$.
More precisely, let
\[
	H^{1/2,2}_{rad}(\re) = \{ u \in H^{1/2,2}(\re) \, : \, u(x) = u(-x), \, x \ge 0 \},
\]
then $H^{1/2,2}_{rad}(\re)$ cannot be embedded compactly in $L^q(\re)$ for any $q > 0$. 
To see this, let $\psi \ne 0$ be an even function in $C_c^{\infty}(\re)$ with $\text{supp}(\psi) \subset (-1,1)$
and put $u_n(x) = \psi(x-n) + \psi(x+n)$. 
Then we see $u_n$ is even, compactly supported smooth function, and $u_n \weakto 0$ weakly in $H^{1/2,2}(\re)$ as $n \to \infty$. 
But $\{ u_n \}$ does not have any strong convergent subsequence in $L^q(\re)$,
because $\| u_n \|_{L^q(\re)}^q = 2 \| \psi \|_{L^q(\re)}^q > 0$ for any $n$ sufficient large.
\end{remark}

However, for a sequence $\{ u_n \}_{n \in \N} \subset H^{1/2,2}(\re)$ with $u_n$ even, nonnegative and decreasing on $\re_{+}$,
we have the following compactness result.

\begin{prop}
\label{prop:compactness}
Assume $\{ u_n \} \subset H^{1/2,2}(\re)$ be a sequence such that $u_n$ is even, nonnegative and decreasing on $\re_{+}$. 
Let $u_n \weakto u$ weakly in $H^{1/2,2}(\re)$.
Then
$u_n \to u$ strongly in $L^q(\re)$ for any $q \in (2, +\infty)$ for a subsequence. 
\end{prop}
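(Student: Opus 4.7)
The plan is to combine a pointwise decay estimate coming from monotonicity (a ``radial lemma'' for symmetric decreasing functions) with the Rellich--Kondrachov compactness on bounded intervals and a uniform tail estimate.

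First, since $\{u_n\}$ converges weakly in $H^{1/2,2}(\re)$, it is bounded there, and in particular $\|u_n\|_{L^2(\re)} \le K$ for some $K>0$. Because each $u_n$ is even, nonnegative, and nonincreasing on $\re_+$, for every $x \ne 0$,
\[
	|x|\, u_n(x)^2 \le \int_0^{|x|} u_n(y)^2 \, dy \le \tfrac{1}{2}\|u_n\|_{L^2(\re)}^2 \le \tfrac{1}{2} K^2,
\]
which yields the uniform decay $u_n(x) \le K/\sqrt{2|x|}$ on $\re \setminus \{0\}$.

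Second, on any bounded interval $(-R,R)$ the embedding $W^{1/2,2}((-R,R)) \hookrightarrow L^q((-R,R))$ is compact for every $q \in [1,\infty)$ (the critical Rellich--Kondrachov case $sp = N = 1$ for fractional Sobolev spaces). A standard diagonal extraction over $R \in \N$ gives a subsequence, still denoted $\{u_n\}$, such that $u_n \to u$ strongly in $L^q((-R,R))$ for every $R \in \N$ and every $q \in (2,\infty)$, and along a further subsequence also pointwise a.e. The pointwise decay is therefore inherited by $u$: $|u(x)| \le K/\sqrt{2|x|}$.

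Third, for $q>2$ I would control the tails by writing
\[
	\int_{|x|>R} |u_n|^q \, dx \le \Bigl(\frac{K}{\sqrt{2R}}\Bigr)^{q-2} \int_{|x|>R} |u_n|^2 \, dx \le \frac{K^q}{(2R)^{(q-2)/2}},
\]
and the same bound for $u$. Given $\eps > 0$, pick $R$ large enough to make both tails less than $\eps/3$ in $L^q$-norm, then take $n$ large so that $\|u_n - u\|_{L^q((-R,R))} < \eps/3$, which gives $\|u_n - u\|_{L^q(\re)} < \eps$.

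The argument is a clean one-dimensional fractional analogue of the Strauss-type compactness for radial functions in higher dimensions, so there is no substantive obstacle; the only point to watch is that $q>2$ is used crucially in the tail estimate (for $q=2$ the $|x|^{-(q-2)/2}$ factor disappears and the bound degenerates), which matches the range of $q$ claimed in the statement and is consistent with the failure of compactness highlighted in the preceding remark.
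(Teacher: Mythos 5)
Your proof is correct, and it uses the same key decay estimate (the ``radial lemma'' for symmetric decreasing functions, giving $u_n(x)^2 \le C/|x|$ uniformly in $n$), but the way you assemble the global $L^q$ convergence differs from the paper. You combine (i) the compact embedding $W^{1/2,2}((-R,R)) \hookrightarrow L^q((-R,R))$ for bounded intervals, a diagonal extraction, and (ii) a uniform tail estimate, and then stitch these together with a $\varepsilon/3$-argument. The paper instead starts from the a.e.~convergence of $u_n$ (which already implicitly uses local compactness), sets $v_n = |u_n-u|^q$, and verifies the hypotheses of Vitali's convergence theorem: uniform integrability of $\{v_n\}$ via the same tail estimate, and uniform absolute continuity via H\"older's inequality together with the embedding $H^{1/2,2}(\re) \hookrightarrow L^{q_0}(\re)$ for $q_0 > q$. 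Both routes are correct; yours is a direct local-compactness-plus-tails argument in the Strauss tradition, while the paper's is organized around Vitali, replacing the explicit invocation of the fractional Rellich--Kondrachov theorem on intervals with the uniform absolute continuity estimate. The two are essentially interchangeable here, and the crucial restriction $q>2$ enters in exactly the same place (the tail estimate) in each.
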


\begin{proof}
Since $\{ u_n \} \subset H^{1/2,2}(\re)$ is a weakly convergent sequence, we have $\sup_{n \in \N} \| u_n \|_{H^{1/2,2}(\re)} \le C$ for some $C > 0$.
We also have $u_n(x) \to u(x)$ a.e $x \in \re$ for a subsequence, thus $u$ is even, nonnegative and decreasing on $\re_{+}$. 
Now, we use the estimate below, which is referred to a Simple Radial Lemma:
If $u \in L^2(\re)$ is even, nonnegative and decreasing on $\re_+$, then it holds
\begin{equation}
\label{Radial Lemma}
	u^2(x) \le \frac{1}{2|x|} \int_{-|x|}^{|x|} u^2(y) dy \le \frac{1}{2|x|}\| u \|^2_{L^2(\re)} \quad (x \ne 0).
\end{equation}
Thus $u_n^2(x)\le \frac{C}{2|x|}$ for $x \ne 0$ by $\sup_{n \in \N} \| u_n \|_{H^{1/2,2}(\re)} \le C$ 
and $u^2(x)\le \frac{C}{2|x|}$ by the pointwise convergence.
Now, set $v_n = | u_n - u|^q$ for $q > 2$. 
Then we see $v_n(x) \to 0$ a.e. $x \in \re$.
Moreover,
\begin{align*}
	\int_{|x| \ge R} |u_n - u|^q dx &= 2 \int_R^{\infty} |u_n - u|^q dx \\ 
	&\le 2^q \(\int_R^{\infty} |u_n|^q dx + \int_R^{\infty} |u|^q dx \) \\
	&\le C \int_R^{\infty} \frac{dx}{| x |^{q/2}} = \frac{C R^{1-q/2}}{(q/2) - 1} \to 0
\end{align*}
as $R \to \infty$ since $q > 2$.
Thus $\{ v_n \}_{n \in \N}$ is uniformly integrable.
Also by \cite{Hitchhiker's guide} Theorem 6.9, we know that
\[
	H^{1/2,2}(\re) \subset L^{q_0}(\re) \quad \text{for any $q_0 \ge 2$ and} \quad \| u \|_{L^{q_0}(\re)} \le C \| u \|_{H^{1/2,2}(\re)}.
\]
For any $q > 2$, take $q_0$ such that $2 < q < q_0 < \infty$.
Since $u_n - u$ is uniformly bounded in $H^{1/2,2}(\re)$, we have $\| u_n - u \|_{L^{q_0}(\re)} \le C$, and
\[
	\int_I v_n dx = \int_I |u_n - u|^q dx \le \( \int_I |u_n - u|^{q_0} dx \)^{q/q_0} |I|^{1-q/q_0}
\]
for any bounded measurable set $I \subset \re$.
Therefore $\int_I v_n dx \to 0$ if $|I| \to 0$, which implies $\{ v_n \}$ is uniformly absolutely continuous.
Thus by Vitali's Convergence Theorem (see for example, \cite{Folland} p.187), we obtain $v_n = |u_n - u|^q \to 0$ strongly in $L^1(\re)$, which is the desired conclusion.
\end{proof}

\begin{prop}
\label{Porp:Ishiwata-Nakamura-Wadade:Lemma3.1}
Assume $\{ u_n \} \subset H^{1/2,2}(\re)$ be a sequence with $\| (-\Delta)^{1/4} u_n \|_{L^2(\re)} \le 1$.  
Let $u_n \weakto u$ weakly in $H^{1/2,2}(\re)$ for some $u$ and assume $u_n$ is even, nonnegative and decreasing on $\re_{+}$. 
Then we have
\begin{align*}
	\int_{\re} \( e^{\alpha u_n^2} -1 - \alpha u_n^2 \) dx \to \int_{\re} \( e^{\alpha u^2} -1 - \alpha u^2 \) dx
\end{align*}
for any $\alpha \in (0, \pi)$.
\end{prop}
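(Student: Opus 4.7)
The plan is to exploit the strong $L^q$ convergence for $q > 2$ provided by Proposition \ref{prop:compactness} together with the Taylor expansion of the integrand. Since $\{u_n\}$ is bounded in $H^{1/2,2}(\re)$ and each $u_n$ is even, nonnegative and decreasing on $\re_+$, Proposition \ref{prop:compactness} yields $u_n \to u$ strongly in $L^q(\re)$ for every $q \in (2, \infty)$ along a subsequence, hence $\int_{\re} u_n^{2k} dx \to \int_{\re} u^{2k} dx$ for every integer $k \ge 2$. Since all quantities are nonnegative, Tonelli's theorem gives
\begin{equation*}
    \int_{\re} \(e^{\alpha u_n^2} - 1 - \alpha u_n^2\) dx = \sum_{k=2}^{\infty} \frac{\alpha^k}{k!} \int_{\re} u_n^{2k} dx,
\end{equation*}
and analogously for $u$. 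The task is therefore reduced to interchanging the limit in $n$ with the summation in $k$.

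To justify the interchange I would fix an auxiliary exponent $\beta \in (\alpha, \pi)$ and obtain a uniform upper bound on the Taylor coefficients by invoking Theorem \ref{Theorem:fractional AT}. Since $\|(-\Delta)^{1/4} u_n\|_{L^2(\re)} \le 1$ and $\sup_n \|u_n\|_{L^2(\re)} < \infty$ by weak convergence, the very definition of $A(\beta)$ provides
\begin{equation*}
    \sum_{k=1}^{\infty} \frac{\beta^k}{k!} \int_{\re} u_n^{2k} dx = \int_{\re} \(e^{\beta u_n^2} - 1\) dx \le A(\beta) \|u_n\|_{L^2(\re)}^2 \le C,
\end{equation*}
uniformly in $n \in \N$. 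Since every summand is nonnegative, each individual term is dominated by $C$, giving
\begin{equation*}
    \frac{\alpha^k}{k!} \int_{\re} u_n^{2k} dx = \(\frac{\alpha}{\beta}\)^k \cdot \frac{\beta^k}{k!} \int_{\re} u_n^{2k} dx \le C \(\frac{\alpha}{\beta}\)^k,
\end{equation*}
which is summable in $k$ and independent of $n$.

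With this dominating sequence the dominated convergence theorem on $\N$ (equipped with counting measure) allows me to pass to the limit term by term, which delivers the desired convergence of the integrals. Finally, to upgrade the argument from a subsequence to the full sequence, I would observe that every subsequence of $\{u_n\}$ has a further subsequence satisfying the hypotheses of Proposition \ref{prop:compactness}, hence producing the same limit $\int_{\re}(e^{\alpha u^2} - 1 - \alpha u^2) dx$; by the standard Urysohn-type argument, the whole sequence converges.

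The main obstacle is the legitimacy of exchanging limit and summation, and the whole argument hinges on being able to choose a subcritical exponent $\beta > \alpha$ strictly below the threshold $\pi$ so that $A(\beta) < \infty$; this is exactly where the subcritical assumption $\alpha < \pi$ in the statement is used, and it is precisely what Theorem \ref{Theorem:fractional AT} provides. The geometric factor $(\alpha/\beta)^k$ then takes care of the summability cleanly, so no delicate estimate on the $L^{2k}$ norms themselves is required beyond the uniform bound inherited from $A(\beta)$.
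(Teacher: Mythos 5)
Your proof is correct, and it takes a genuinely different route from the paper's. The paper estimates the pointwise difference $|\Psi_\alpha(u_n)-\Psi_\alpha(u)|$ via the mean value theorem and convexity of $\Phi_\alpha$, then applies H\"older's inequality with a three-fold split of exponents $a,b,c$: the middle factor $\|\Phi_\alpha(u_n)+\Phi_\alpha(u)\|_{L^b}$ is controlled by Theorem~\ref{Theorem:fractional AT} after picking $b$ with $b\alpha<\pi$, the outer factors by the Sobolev embedding and Proposition~\ref{prop:compactness}. You instead expand $e^{\alpha u_n^2}-1-\alpha u_n^2$ into its Taylor series via Tonelli, reduce the claim to convergence of the moments $\int u_n^{2k}\,dx$ (which Proposition~\ref{prop:compactness} gives for every $k\ge 2$), and justify the interchange of limit and sum by an $n$-independent geometric dominating series obtained from Theorem~\ref{Theorem:fractional AT} with an auxiliary exponent $\beta\in(\alpha,\pi)$. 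Both arguments rely on the same two ingredients --- the compactness proposition and the subcritical finiteness of $A(\cdot)$ --- but your approach avoids the mean value theorem and H\"older gymnastics and replaces them with a clean dominated-convergence argument on the summation index, at the price of being tied specifically to the power-series structure of the exponential; the paper's pointwise estimate would transfer more directly to nonlinearities that do not admit such an expansion. Your treatment of the subsequence issue via the Urysohn argument is also correct and necessary, since Proposition~\ref{prop:compactness} as stated only yields strong $L^q$ convergence along a subsequence.
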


\begin{proof}
The similar  proposition above is already appeared, see \cite{Ishiwata-Nakamura-Wadade} Lemma 3.1, and \cite{Dong-Lu} Lemma 5.5.
We prove it here for the reader's convenience.

Put $\Pha(t) = e^{\alpha t^2} -1$ and $\Psa(t) = e^{\alpha t^2} -1 - \alpha t^2$. 
Note that $\Phi_{\alpha}(t)$ is nonnegative, strictly convex 
and $\Psi_{\alpha}^{\prime}(t) = 2\alpha t \Phi_{\alpha}(t)$.
Thus by the mean value theorem, we have
\begin{align*}
	|\Psa(u_n) - \Psa(u)| &\le \Psa^{\prime}(\theta u_n + (1-\theta) u) |u_n - u| \\ 
	&\le 2\alpha |\theta u_n + (1-\theta) u| \Pha(\theta u_n + (1-\theta) u) |u_n - u| \\
	&\le 2\alpha (|u_n| + |u|) \( \theta \Pha(u_n) + (1-\theta) \Pha(u) \) |u_n - u| \\
	&\le 2\alpha (|u_n| + |u|) \( \Pha(u_n) + \Pha(u) \) |u_n - u|.
\end{align*}
Thus we have
\begin{align}
\label{P1}
	&\int_{\re} |\Psa(u_n) - \Psa(u)| dx \le 2\alpha \int_{\re} (|u_n| + |u|) \( \Pha(u_n) + \Pha(u) \) |u_n - u| dx \notag \\
	&\le 2\alpha \| |u_n| + |u| \|_{L^a(\re)} \| \Pha(u_n) + \Pha(u) \|_{L^b(\re)} \| u_n - u \|_{L^c(\re)}
\end{align}
by H\"older's inequality, where $a,b,c > 1$ and $1/a + 1/b + 1/c = 1$ are chosen later.

First, direct calculation shows that
\begin{equation}
\label{Pha^b}
	\( \Pha(t) \)^b < e^{b\alpha t^2} - 1 \quad (t \in \re)
\end{equation}
for all $b > 1$.
Thus if we fix $1 < b < \pi/\alpha$ so that $b \alpha < \pi$ is realized, then we have
\begin{align*}
	&\| \Pha(u_n) + \Pha(u) \|^b_{L^b(\re)} \le \( \| \Pha(u_n) \|_{L^b(\re)} + \| \Pha(u) \|_{L^b(\re)} \)^b \\
	&\le 2^{b-1} \( \int_{\re} \( \Pha(u_n) \)^b dx + \int_{\re}  \(\Pha(u) \)^b dx \) \\
	&\le 2^{b-1} \( \int_{\re} \( e^{b\alpha u_n^2} - 1 \) dx + \int_{\re} \( e^{b\alpha u^2} - 1 \) dx \) \\
	&\le 2^{b-1} A(b \alpha) \( \| u_n \|_{L^2(\re)}^2 + \| u \|_{L^2(\re)}^2 \),
\end{align*}
here we used (\ref{Pha^b}) for the third inequality and Theorem \ref{Theorem:fractional AT} for the last inequality,
the use of which is valid since $\| (-\Delta)^{1/4} u_n \|_{L^2(\re)} \le 1$ and $\| (-\Delta)^{1/4} u \|_{L^2(\re)} \le 1$ by the weak lower semicontinuity.
Note that $\{ u_n \}$ satisfies $\sup_{n \in \N} \| u_n \|_{H^{1/2,2}(\re)} \le C$ for some $C > 0$.
Thus we have obtained $\| \Pha(u_n) + \Pha(u) \|_{L^b(\re)} = O(1)$ independent of $n$.

Next, we estimate the term $\| |u_n| + |u| \|_{L^a(\re)}$.
Since $\{ u_n \}$ is a bounded sequence in $H^{1/2,2}(\re)$, we have by \cite{Hitchhiker's guide} Theorem 6.9 that
$\| u \|_{L^q(\re)} \le C \| u_n \|_{H^{1/2,2}(\re)}$ for any $q \ge 2$.
Thus we see $\| |u_n| + |u| \|_{L^a(\re)} \le C$ for some $C > 0$ independent of $n$ for $a \ge 2$.
Now, note that if we choose $1 < b < \pi/\alpha$ and $a > 2$ sufficiently large, 
then we can find $c > 2$ such that $1/a + 1/b + 1/c = 1$.

By these choices and Proposition \ref{prop:compactness}, we conclude that
$\| u_n - u \|_{L^c(\re)} \to 0$ as $n \to \infty$.
Backing to (\ref{P1}) with all together, we conclude that
\[
	\int_{\re} \Psa(u_n) dx \to \int_{\re} \Psa(u) dx \quad (n \to \infty),
\]
which is the desired conclusion.
\end{proof}

Now, we prove Theorem \ref{Theorem:Attainability}.
We will show that $A(\alpha)$ in (\ref{Aalpha}) is attained for any $0 < \alpha < \pi$.
Since $A(\alpha) = \tilde{A}(\alpha)$ by Lemma \ref{Lemma1}, we choose a maximizing sequence for $\tilde{A}(\alpha)$:
\[
	\int_{\re} \( e^{\alpha u_n^2} -1 \) dx = A(\alpha) + o(1) \quad (n \to \infty).
\]
Here $\{ u_n \}_{n \in \N} \subset H^{1/2,2}(\re)$ satisfies $\| (-\Delta)^{1/4}u_n \|_{L^2(\re)} \le 1$ and $\| u_n \|_{L^2(\re)} = 1$.
By appealing to the use of rearrangement, we may furthermore assume that $u_n$ is nonnegative, even, and decreasing on $\re_{+}$.
Since $\{ u_n \}_{n \in \N} \subset H^{1/2,2}(\re)$ is a bounded sequence, we have $u \in H^{1/2,2}(\re)$ such that $u_n \weakto u$ in $H^{1/2,2}(\re)$.
By Proposition \ref{Porp:Ishiwata-Nakamura-Wadade:Lemma3.1}, we see
\[
	\int_{\re} \( e^{\alpha u_n^2} -1 -\alpha u_n^2 \) dx = \int_{\re} \( e^{\alpha u^2} -1 -\alpha u^2 \) dx
\]
as $n \to \infty$.
Therefore, since $\| u_n \|_{L^2(\re)}^2 = 1$, we have, letting $n \to \infty$,
\begin{equation}
\label{A1}
	A(\alpha) = \alpha + \int_{\re} \( e^{\alpha u^2} -1 -\alpha u^2 \) dx.
\end{equation}

Next, we claim that $A(\alpha) > \alpha$ for any $0 < \alpha < \pi$.
Indeed, 
take any $u_0\in H^{1/2,2}(\re)$ such that $u_0 \not\equiv 0$, $\| (-\Delta)^{1/4}u_0 \|_{L^2(\re)} \le 1$ and $\| u_0 \|_{L^2(\re)} = 1$.
Then we have
\[
	A(\alpha) = \tilde{A}(\alpha) \ge \int_{\re} \( e^{\alpha u_0^2} -1 \) dx = \alpha + \int_{\re} \( e^{\alpha u_0^2} -1 -\alpha u_0^2 \) dx.
\]
Now, 
since $e^{\alpha t^2} - 1 - \alpha t^2 > 0$ for any $t > 0$, we have 
\[
	\int_{\re} \( e^{\alpha u_0^2} -1 -\alpha u_0^2 \) dx > 0
\]
for $u_0 \not\equiv 0$, which results in $A(\alpha) > \alpha$, the claim.

By the claim and (\ref{A1}), we conclude that the weak limit $u$ satisfies $u \not\equiv 0$.
By the weak lower semi continuity, we have $u \not\equiv 0$ satisfies
$\| (-\Delta)^{1/4}u \|_{L^2(\re)} \le 1$ and $\| u \|_{L^2(\re)} \le 1$.
Thus by (\ref{A1}) again, we see
\begin{align*}
	A(\alpha) &= \alpha + \int_{\re} \( e^{\alpha u^2} -1 -\alpha u^2 \) dx \\
	&\le \alpha + \frac{1}{\| u \|_{L^2(\re)}^2} \int_{\re} \( e^{\alpha u^2} -1 -\alpha u^2 \) dx \\
	&= \alpha + \frac{1}{\| u \|_{L^2(\re)}^2} \int_{\re} \( e^{\alpha u^2} -1 \) dx -\alpha \frac{\| u \|_{L^2(\re)}^2}{\| u \|_{L^2(\re)}^2} \\
	&= \frac{1}{\| u \|_{L^2(\re)}^2} \int_{\re} \( e^{\alpha u^2} -1 \) dx.
\end{align*}
Thus we have shown that $u \in H^{1/2,2}(\re)$ maximizes $A(\alpha)$.
\qed

Next, we prove Theorem \ref{Theorem:Non-attainability}. 
We follow Ishiwata's argument in \cite{Ishiwata}.
Let
\begin{align*}
	&M = \left\{ u \in H^{1/2,2}(\re) \, : \, \| u \|_{H^{1/2,2}(\re)} = 1 \right\}, \\
	&J_{\alpha} :M \to \re, \quad J_{\alpha}(u) = \int_{\re} \( e^{\alpha u^2} - 1 \) dx.
\end{align*}
Actually, we will show a stronger claim that $J_{\alpha}$ has no critical point on $M$ for sufficiently small $\alpha > 0$.
Assume the contrary that there exists a critical point $v \in M$ of $J_{\alpha}$ for small $\alpha >0$.
Then we define an orbit on $M$ through $v$ as
\[
	v_{\tau}(x) = \sqrt{\tau} v(\tau x) \quad \tau \in (0,\infty), \quad w_{\tau} = \frac{v_{\tau}}{\| v_{\tau} \|_{H^{1/2}}} \in M.
\]
Note that $w_1 = v$ thus it must be $\frac{d}{d\tau} \Big|_{\tau = 1} J_{\alpha}(w_{\tau}) = 0$.
By scaling rules (\ref{scaling}), we see for any $p \ge 2$,
\begin{align*}
	\| v_{\tau} \|^p_{L^p(\re)} = \tau^{p/2-1}\| v \|^p_{L^p(\re)} \quad \text{and} \quad \| (-\Delta)^{1/4} v_{\tau} \|_{L^2(\re)} = \tau \| (-\Delta)^{1/4} v \|_{L^2(\re)}.
\end{align*}
Now, we see
\begin{align*}
	&J_{\alpha}(w_{\tau}) = \int_{\re} \( e^{\alpha w_{\tau}^2} - 1 \) dx
	= \int_{\re} \sum_{j=1}^{\infty} \frac{\alpha^j}{j!} \frac{v_{\tau}^{2j}(x)}{\( \| v_{\tau} \|_2^2 + \| (-\Delta)^{1/4} v_{\tau} \|_2^2 \)^j} \\
	&= \sum_{j=1}^{\infty} \frac{\alpha^j}{j!} \frac{\| v_{\tau} \|_{2j}^{2j}}{\( \| v_{\tau} \|_2^2 + \| (-\Delta)^{1/4} v_{\tau} \|_2^2 \)^j}
	= \sum_{j=1}^{\infty} \frac{\alpha^j}{j!} \frac{\tau^{j-1} \| v \|_{2j}^{2j}}{\( \| v \|_2^2 + \tau \| (-\Delta)^{1/4} v \|_2^2 \)^j} \\
	&= \sum_{j=1}^{\infty} \frac{\alpha^j}{j!} f_j(\tau)
\end{align*}
where $f_j(\tau) = \frac{\tau^{j-1} c}{(b + \tau a)^j}$ with
$a = \| (-\Delta)^{1/4} v \|_2^2$, $b =  \| v \|_2^2$ and $c =  \| v \|_{2j}^{2j}$.
Since
\[
	f_j'(\tau) = \frac{\tau^{j-2} c}{(b + \tau a)^{j+1}} \left\{ -\tau a + (j-1) b \right\}
\]
and $\| (-\Delta)^{1/4} v \|_2^2 + \| v \|_2^2 = 1$,
we calculate
\begin{align*}
	&\frac{d}{d\tau} \Big|_{\tau = 1} J_{\alpha}(w_{\tau}) \\ 
	&= \sum_{j=1}^{\infty} \left[ \frac{\alpha^j}{j!} \frac{\tau^{j-2} \| v \|_{2j}^{2j}}{\( \| v \|_2^2 + \tau \| (-\Delta)^{1/4} v \|_2^2 \)^{j+1}} 
	\left\{ -\tau \| (-\Delta)^{1/4} v \|_2^2 + (j-1) \| v \|_2^2 \right\} \right]_{\tau = 1} \\
	&\le -\alpha \| (-\Delta)^{1/4} v \|_2^2 \| v \|_2^2 + \sum_{j=2}^{\infty} \frac{\alpha^{j}}{(j-1)!} \| v \|_{2j}^{2j} \\
	&= \alpha \| (-\Delta)^{1/4} v \|_2^2 \| v \|_2^2 
	\left\{ -1 + \sum_{j=2}^{\infty} \frac{\alpha^{j-1}}{(j-1)!} \frac{\| v \|_{2j}^{2j}}{\| (-\Delta)^{1/4} v \|_2^2 \| v \|_2^2} \right\} .
\end{align*}
Here, we need the following lemma:
\begin{lemma}{(Ogawa-Ozawa \cite{Ogawa-Ozawa})}
\label{Lemma:Ogawa-Ozawa}
There exists $C >0$ such that for any $u \in H^{1/2, 2}(\re)$ and $p \ge 2$, 
it holds
\[
	\| u \|_{L^p(\re)}^p \le C p^{p/2} \| (-\Delta)^{1/4} u \|_{L^2(\re)}^{p-2} \| u \|_{L^2(\re)}^2.
\]
\end{lemma}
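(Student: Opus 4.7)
The plan is to derive this Gagliardo-Nirenberg type inequality directly from Theorem \ref{Theorem:fractional AT} by Taylor-expanding the exponential, extracting bounds term by term, and passing from even integers to arbitrary $p$ via log-convexity of $L^p$ norms. By the homogeneity of both sides under $u \mapsto \lambda u$, it suffices, after normalizing $\|(-\Delta)^{1/4} u\|_{L^2(\re)} = 1$, to establish $\|u\|_{L^p(\re)}^p \le C p^{p/2} \|u\|_{L^2(\re)}^2$ for every $p \ge 2$ with an absolute constant $C$.

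Fix any $\alpha \in (0, \pi)$; by Theorem \ref{Theorem:fractional AT} we have $A(\alpha) < \infty$. Expanding the exponential yields
\begin{equation*}
\sum_{q=1}^\infty \frac{\alpha^q}{q!} \|u\|_{L^{2q}(\re)}^{2q} = \int_\re \( e^{\alpha u^2} - 1 \) dx \le A(\alpha) \|u\|_{L^2(\re)}^2,
\end{equation*}
so each individual term obeys $\|u\|_{L^{2q}(\re)}^{2q} \le (q!/\alpha^q) A(\alpha) \|u\|_{L^2(\re)}^2$ for every $q \in \N$. Invoking Stirling's estimate $q! \le C\sqrt{q}\,(q/e)^q$ and rearranging,
\[
	\|u\|_{L^{2q}(\re)}^{2q} \le C \sqrt{q} \(\frac{1}{2\alpha e}\)^q (2q)^q \|u\|_{L^2(\re)}^2.
\]
Since $\pi > 1/(2e)$, one may choose $\alpha$ with $2\alpha e > 1$ so that $\sqrt{q}\,(1/(2\alpha e))^q$ remains uniformly bounded in $q$; this establishes the target inequality for all even integers $p = 2q$.

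For arbitrary real $p \ge 2$, set $q = \lceil p/2 \rceil$ so that $p \le 2q \le p+2$, and apply H\"older's inequality to obtain $\|u\|_{L^p} \le \|u\|_{L^2}^{1-\theta} \|u\|_{L^{2q}}^\theta$ with the unique $\theta \in [0,1]$ determined by $1/p = (1-\theta)/2 + \theta/(2q)$. A routine calculation shows that after raising to the $p$-th power and substituting the even-integer bound, the exponent of $\|u\|_{L^2(\re)}$ collapses to exactly $2$ while $p\theta/2 \le p/2$, so that $(2q)^{p\theta/2} \le (p+2)^{p/2} \le e \cdot p^{p/2}$, completing the proof with a universal constant.

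The principal difficulty is calibrating $\alpha$: the naive bound $q! \sim (q/e)^q$ yields a prefactor of size $q^q$, which is too large by a factor $2^q$, so one must exploit the fact that Theorem \ref{Theorem:fractional AT} allows $\alpha$ to be taken arbitrarily close to the critical value $\pi$ (and in particular above $1/(2e)$) in order to absorb the discrepancy. This is precisely where the sharp Trudinger-Moser threshold $\pi$ feeds quantitatively into the Gagliardo-Nirenberg constant.
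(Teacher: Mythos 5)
The paper offers no proof of this lemma; it is cited directly from Ogawa--Ozawa \cite{Ogawa-Ozawa}, whose original argument establishes the Gagliardo--Nirenberg estimate first and then deduces a Trudinger--Moser type bound for small $\alpha$ from it. Your proof runs in the reverse direction, reading off the power-wise estimates from the already-proved Theorem~\ref{Theorem:fractional AT}, and the computation is sound: after normalizing $\|(-\Delta)^{1/4}u\|_{L^2(\re)} = 1$, the term-by-term bound $\|u\|_{L^{2q}(\re)}^{2q} \le (q!/\alpha^q)A(\alpha)\|u\|_{L^2(\re)}^2$ together with Stirling's $q! \le C\sqrt{q}\,(q/e)^q$ gives the even-integer case once $2e\alpha > 1$, which is available since $1/(2e) < \pi$; for general $p$, interpolating from $2q = 2\lceil p/2\rceil$ indeed collapses the $\|u\|_{L^2(\re)}$-exponent to exactly $2$ (a short computation with $\theta = q(p-2)/(p(q-1))$ confirms $p(1-\theta)+p\theta/q = 2$), and $(p+2)^{p/2} = (1+2/p)^{p/2}p^{p/2} \le e\,p^{p/2}$ absorbs the rounding loss, with the surviving power of the constant bounded by $\max(C,1)$ since $p\theta/(2q)\le 1$. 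There is no circularity: Theorem~\ref{Theorem:fractional AT} is obtained in Section~2 from Lemma~\ref{Lemma2} and Proposition~\ref{Prop:fractional Li-Ruf}, neither of which invokes the present lemma, which the paper uses only in the non-attainability argument of Section~3. Your derivation is, in effect, one direction of the Trudinger--Moser/Gagliardo--Nirenberg equivalence the paper ascribes to Ozawa just before the corollary identifying $\beta_0 = (2\pi e)^{-1/2}$; the trade-off is that it is parasitic on the finiteness of $A(\alpha)$ and so, unlike the original Ogawa--Ozawa result, cannot serve as an independent input for establishing it.
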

For $p = 2j$, Lemma \ref{Lemma:Ogawa-Ozawa} implies 
\begin{align*}
	\frac{\| v \|_{2j}^{2j}}{\| (-\Delta)^{1/4} v \|_2^2 \| v \|_2^2} \le C (2j)^j \underbrace{\| (-\Delta)^{1/4} v \|_2^{2j-4}}_{\le 1 \, (j \ge 2)} \le C (2j)^j.
\end{align*}
Thus for $0 < \alpha << 1$ sufficiently small (it would be enough that $\alpha < 1/(2e)$), 
Stirling's formula $j! \sim j^j e^{-j} \sqrt{2\pi j}$ implies that 
\[
	\sum_{j=2}^{\infty} \frac{\alpha^{j-1}}{(j-1)!} \frac{\| v \|_{2j}^{2j}}{\| (-\Delta)^{1/4} v \|_2^2 \| v \|_2^2}  \le \sum_{j=2}^{\infty} \frac{\alpha^{j-1}}{(j-1)!} (2j)^j \le \alpha C
\]
for some $C > 0$ independent of $\alpha$.
Therefore we have
$\frac{d}{d\tau} J_{\alpha}(w_{\tau}) \Big |_{\tau = 1} < 0$ for small $\alpha$, which is a desired contradiction.
\qed

\section{Proof of Theorem \ref{Theorem:Dong-Lu}.}

In order to prove Theorem \ref{Theorem:Dong-Lu}, first we set
\begin{equation} 
\label{Fbeta}
	F(\beta) = \sup_{u \in H^{1/2,2}(\re) \atop \| u \|_{H^{1/2,2}(\re)} \le 1} \int_{\re} u^2 e^{\beta u^2} dx
\end{equation}
for $\beta > 0$.
Then we have
\begin{prop}
\label{Prop:Fbeta}
We have $F(\beta) < \infty$ for $\beta < \pi$
\end{prop}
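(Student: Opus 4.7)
The plan is to reduce the claim to Theorem~\ref{Theorem:fractional AT} by splitting the integral into the region where $|u|$ is bounded and the region where $|u|$ is large. The factor $u^2$ in front of $e^{\beta u^2}$ causes only a polynomial blow-up, so at the expense of a slightly larger exponent we can absorb it into an exponential of the form $e^{\beta' u^2}$.

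Concretely, fix any $\beta'$ with $\beta<\beta'<\pi$. Since $(\beta'-\beta)s-\log s\to\infty$ as $s\to\infty$, there exists $M_0=M_0(\beta,\beta')>0$ such that
\[
s\,e^{\beta s}\;\le\;e^{\beta' s}-1\qquad\text{for all }s\ge M_0^2.
\]
Now take any $u\in H^{1/2,2}(\re)$ with $\|u\|_{H^{1/2,2}(\re)}\le 1$, so that in particular $\|(-\Delta)^{1/4}u\|_{L^2(\re)}\le 1$ and $\|u\|_{L^2(\re)}\le 1$. Split
\[
\int_{\re}u^2 e^{\beta u^2}\,dx
=\int_{\{|u|\le M_0\}}u^2 e^{\beta u^2}\,dx+\int_{\{|u|>M_0\}}u^2 e^{\beta u^2}\,dx.
\]

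On the first piece we use the crude bound $u^2 e^{\beta u^2}\le e^{\beta M_0^2}u^2$, which gives
\[
\int_{\{|u|\le M_0\}}u^2 e^{\beta u^2}\,dx\;\le\;e^{\beta M_0^2}\,\|u\|_{L^2(\re)}^2\;\le\;e^{\beta M_0^2}.
\]
On the second piece we apply the pointwise inequality from the choice of $M_0$ with $s=u^2$, which yields
\[
\int_{\{|u|>M_0\}}u^2 e^{\beta u^2}\,dx\;\le\;\int_{\re}\bigl(e^{\beta' u^2}-1\bigr)\,dx\;\le\;A(\beta')\,\|u\|_{L^2(\re)}^2\;\le\;A(\beta'),
\]
where Theorem~\ref{Theorem:fractional AT} guarantees $A(\beta')<\infty$ because $\beta'<\pi$. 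Taking the supremum over admissible $u$ gives $F(\beta)\le e^{\beta M_0^2}+A(\beta')<\infty$.

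There is no real obstacle here: the one place that requires mild care is the selection of $M_0$ (equivalently, the elementary asymptotic comparison $se^{\beta s}=o(e^{\beta' s})$), and the one place the hypothesis $\beta<\pi$ is used is via the subcriticality of $\beta'$ that allows us to invoke Theorem~\ref{Theorem:fractional AT}. Sharpness of the threshold (i.e.\ $F(\pi)=\infty$) is not claimed in this proposition and will presumably be addressed when proving the full Theorem~\ref{Theorem:Dong-Lu}, by testing on the Moser sequence as in the proof of $A(\pi)=\infty$.
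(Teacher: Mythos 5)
Your proof is correct, and it takes a genuinely different and shorter route than the paper. The paper proves Proposition~\ref{Prop:Fbeta} directly, without invoking Theorem~\ref{Theorem:fractional AT}: after replacing $u$ by its symmetric-decreasing rearrangement, it splits the line into $I=(-1/2,1/2)$ and its complement, controls the outer integral by a term-by-term series estimate based on the Radial Lemma (\ref{Radial Lemma}) (which gives $u^{2k}(x)\lesssim |x|^{-k}$, summable over $\re\setminus I$), and controls the inner integral by subtracting the boundary value $u(1/2)$, rescaling, and appealing to the bounded-interval fractional Trudinger--Moser inequality (Proposition~\ref{Prop:fractional TM}). Your argument instead leans on the already-established $A(\beta')<\infty$ for $\beta<\beta'<\pi$: since $s e^{\beta s}\le e^{\beta' s}-1$ for $s$ large, one has the pointwise bound $u^2 e^{\beta u^2}\le e^{\beta M_0^2}u^2+(e^{\beta' u^2}-1)$ for all $u$, and integrating gives $F(\beta)\le e^{\beta M_0^2}+A(\beta')$ after using $\|(-\Delta)^{1/4}u\|_{L^2}\le 1$, $\|u\|_{L^2}\le 1$. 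This bypasses rearrangement and the tail estimate entirely, and there is no circularity because Theorem~\ref{Theorem:fractional AT} is proved in Section~2 using only the Li--Ruf-type inequality of Iula--Maalaoui--Martinazzi. What the paper's longer proof buys is independence from the Adachi--Tanaka result, reproving a Li--Ruf-type bound from scratch in the spirit of \cite{Iula-Maalaoui-Martinazzi}; what your proof buys is brevity and the observation that, once $A(\cdot)$ is known to be finite below $\pi$, the weighted variant $F(\cdot)$ follows by an elementary pointwise comparison. (In fact, the same pointwise bound applied with the seminorm constraint directly yields $E(\alpha)\le e^{\alpha M_0^2}+A(\beta')$, so the detour through $F$ and the scaling reduction $E(\alpha)\lesssim F(\beta)$ is not strictly needed for the finiteness part of Theorem~\ref{Theorem:Dong-Lu}.)
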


\begin{proof}
We follow the proof of Theorem 1.5 in \cite{Iula-Maalaoui-Martinazzi}.
Take any $u \in H^{1/2,2}(\re)$ with $\| u \|_{H^{1/2,2}(\re)} \le 1$ in the admissible sets for $F(\beta)$ in (\ref{Fbeta}). 
By appealing to the rearrangement, we may assume that $u$ is even, nonnegative and decreasing on $\re_{+}$.
We divide the integral
\[
	\int_{\re} u^2 e^{\beta u^2} dx = \int_{\re \setminus I} u^2 e^{\beta u^2} dx + \int_I u^2 e^{\beta u^2} dx = (I) + (II), 
\]
where $I = (-1/2, 1/2)$.

First, we estimate $(I)$.
By the Radial Lemma (\ref{Radial Lemma}), we see for any $k \in \N$, $k \ge 2$,
\[
	u^{2k}(x) \le \( \frac{\| u \|_{L^2(\re)}^2}{2|x|} \)^k = \frac{\| u \|_{L^2(\re)}^{2k}}{2^k} \frac{1}{|x|^k} \quad \text{for} \quad x \ne 0. 
\]
Thus
\begin{align*}
	\int_{\re \setminus I} u^{2k}(x) dx &\le \frac{\| u \|_{L^2(\re)}^{2k}}{2^k} \int_{\re \setminus I} \frac{dx}{|x|^k} \\
	&= \frac{\| u \|_{L^2(\re)}^{2k}}{2^{k-1}} \int_{1/2}^{\infty} \frac{dx}{x^k} = \frac{\| u \|_{L^2(\re)}^{2k}}{k-1}.
\end{align*}
Therefore, we have
\begin{align*}
	(I) &= \int_{\re \setminus I} u^2 e^{\beta u^2} dx = \int_{\re \setminus I} u^2 \( 1 + \sum_{k=1}^{\infty} \frac{\beta^k u^{2k}}{k!} \) dx \\
	&= \int_{\re \setminus I} u^2 dx + \sum_{k=2}^{\infty} \frac{\beta^{k-1}}{(k-1)!} \int_{\re \setminus I} u^{2k} dx \\
	&\le \| u \|_{L^2(\re)}^2 + \sum_{k=2}^{\infty}  \frac{\beta^{k-1}}{(k-1)!}  \frac{\| u \|_{L^2(\re)}^{2k}}{k-1} \\
	&= \| u \|_{L^2(\re)}^2 \( 1  + \sum_{k=2}^{\infty}  \frac{\beta^{k-1}}{(k-1)(k-1)!} \| u \|_{L^2(\re)}^{2(k-1)} \).
\end{align*}
Now by the constraint $\| u \|_{H^{1/2,2}(\re)} \le 1$, we have $\| u \|_{L^2(\re)} \le 1$.
Also if we put $a_k = \frac{\beta^{k-1}}{(k-1)(k-1)!}$, then $\sum_{k=2}^{\infty} a_k$ converges since $a_{k+1}/a_k = \beta \frac{k-1}{k^2} \to 0$ as $k \to \infty$.
Thus we obtain
\[
	(I) \le 1  + \sum_{k=2}^{\infty} \frac{\beta^{k-1}}{(k-1)(k-1)!} \le C
\]
where $C >0$ is independent of $u \in H^{1/2,2}(\re)$ with $\| u \|_{H^{1/2,2}(\re)} \le 1$.

Next, we estimate $(II)$.
Set
\[
	v(x) = \begin{cases}
	u(x) - u(1/2), &\quad |x| \le 1/2, \\
	0, &\quad |x| > 1/2.
	\end{cases}
\]
Then by the argument of \cite{Iula-Maalaoui-Martinazzi}, we know that
\begin{align*}
	&\| (-\Delta)^{1/4} v \|_{L^2(\re)}^2 \le \| (-\Delta)^{1/4} u \|_{L^2(\re)}^2, \\
	&u^2(x) \le v^2(x) \( 1 + \| u \|_{L^2(\re)}^2 \) + 2
\end{align*}
for $x \in I$.
Put $w = v \sqrt{1 +\| u \|_{L^2(\re)}^2}$. 
Then we have $w \in \tilde{H}^{1/2,2}(I)$ since $v \equiv 0$ on $\re \setminus I$, and
\begin{align*}
	&\| (-\Delta)^{1/4} w \|_{L^2(\re)}^2 = \( 1 + \| u \|_{L^2(\re)}^2 \) \| (-\Delta)^{1/4} v \|_{L^2(\re)}^2 \\
	&\le \( 1 + \| u \|_{L^2(\re)}^2 \)\( 1 - \| u \|_{L^2(\re)}^2 \) \le 1.
\end{align*}
Thus we may use the fractional Trudinger-Moser inequality (Proposition \ref{Prop:fractional TM}) to $w$ to obtain
\begin{align*}
	\int_I e^{\pi w^2} dx \le C
\end{align*}
for some $C > 0$ independent of $u$.
By $u^2 \le w^2 + 2$ on $I$, we conclude that
\[
	\int_I e^{\pi u^2} dx \le  \int_I e^{\pi(w^2 + 2)} dx = e^{2\pi} \int_I e^{\pi w^2} dx \le C'.
\]
Now, since $\beta < \pi$, there is an absolute constant $C_0$ such that $t^2 e^{\beta t^2} \le C_0 e^{\pi t^2}$ for any $t \in \re$.
Finally, we obtain
\[
	(II) = \int_I u^2 e^{\beta u^2} dx \le C_0 \int_I e^{\pi u^2} dx \le C_0 C'.
\] 
Proposition \ref{Prop:Fbeta} follows from the estimates $(I)$ and $(II)$.
\end{proof}

By using Proposition \ref{Prop:Fbeta} and arguing as in the proof of Theorem \ref{Theorem:fractional AT}
(after establishing the similar claims as in Lemma \ref{Lemma1} and Lemma \ref{Lemma2}),
it is easy to obtain the following Proposition: 
\begin{prop}
For any $0 < \alpha < \beta < \pi$, we have
\[
	E(\alpha) \le \( \frac{1}{1 - \alpha/\beta} \) F(\beta).
\]
\end{prop}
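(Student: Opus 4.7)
The plan is to mirror the two-step strategy used for $A(\alpha)$ in Lemma \ref{Lemma1} and Lemma \ref{Lemma2}, now with $E(\alpha)$ playing the role of $A(\alpha)$ and $F(\beta)$ playing the role of $B(\pi)$.

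First I would establish a scale-invariance reduction analogous to Lemma \ref{Lemma1}: define
\[
    \tilde{E}(\alpha) = \sup_{\substack{u \in H^{1/2,2}(\re)\setminus\{0\} \\ \|(-\Delta)^{1/4}u\|_{L^2(\re)} \le 1 \\ \|u\|_{L^2(\re)} = 1}} \int_{\re} u^2 e^{\alpha u^2}\, dx,
\]
and show $E(\alpha) = \tilde{E}(\alpha)$. Given an admissible $u$ for $E(\alpha)$, the rescaling $u_{\lambda}(x) = u(\lambda x)$ with $\lambda = \|u\|_{L^2(\re)}^2$ is admissible for $\tilde{E}(\alpha)$ by the scaling rules (\ref{scaling}), and a short change of variables shows that the ratio $\|u\|_{L^2(\re)}^{-2}\int u^2 e^{\alpha u^2}\, dx$ is invariant under $u \mapsto u_{\lambda}$; the opposite inequality is trivial.

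Next I would adapt Lemma \ref{Lemma2}. Given $u$ admissible for $\tilde{E}(\alpha)$, set $C^2 = \alpha/\beta \in (0,1)$, $\lambda = C^2/(1-C^2)$, and $v(x) = C\, u(\lambda x)$. The scaling identities (\ref{scaling}) give $\|v\|_{H^{1/2,2}(\re)}^2 \le C^2 + \lambda^{-1} C^2 = 1$, so $v$ is admissible for $F(\beta)$, while a change of variables produces
\[
    \int_{\re} v^2 e^{\beta v^2}\, dx = C^2 \lambda^{-1} \int_{\re} u^2 e^{\alpha u^2}\, dy = (1-\alpha/\beta)\int_{\re} u^2 e^{\alpha u^2}\, dy,
\]
using $\beta C^2 = \alpha$ and $C^2 \lambda^{-1} = 1-C^2$. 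Testing $F(\beta)$ against $v$ and taking the supremum over all admissible $u$ yields $F(\beta) \ge (1-\alpha/\beta)\,\tilde{E}(\alpha)$; combined with the identification $E = \tilde{E}$ this gives the claim.

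The argument runs entirely in parallel to Lemma \ref{Lemma2}, so I do not expect a significant obstacle. The one detail worth noting is that, under the rescaling $v = Cu(\lambda x)$, the $u^2$-prefactor inside the integrand of $E(\alpha)$ contributes an extra factor $C^2$ which combines with the Jacobian $\lambda^{-1}$ to produce exactly $1-\alpha/\beta$ (in contrast to the factor $(1-\alpha/\pi)/(\alpha/\pi)$ that arose for $A$ versus $B$). The hypothesis $\beta < \pi$ enters only through Proposition \ref{Prop:Fbeta} to guarantee $F(\beta) < \infty$; the algebraic chain of inequalities itself needs only $\alpha < \beta$.
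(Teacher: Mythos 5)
Your proof is correct and follows exactly the route the paper indicates ("after establishing the similar claims as in Lemma \ref{Lemma1} and Lemma \ref{Lemma2}"): a scale-invariance reduction to the normalized functional $\tilde{E}(\alpha)$, followed by testing $F(\beta)$ with $v = Cu(\lambda\cdot)$ for $C^2 = \alpha/\beta$ and $\lambda = C^2/(1-C^2)$. The algebra, including the observation that the extra $u^2$ factor supplies $C^2$ which combines with the Jacobian $\lambda^{-1}$ to give $1-\alpha/\beta$ rather than $(1-\alpha/\beta)/(\alpha/\beta)$, is exactly right.
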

Since $F(\beta) < \infty$ for any $\beta < \pi$, this proves the first part of Theorem \ref{Theorem:Dong-Lu}.
For the attainability of $E(\alpha)$ for $\alpha \in (0,\pi)$, it is enough to argue as in the proof of Theorem \ref{Theorem:Attainability}.
We omit the details.
\qed

%
%

\vspace{1em}\noindent
{\bf Acknowledgments.}

Part of this work was supported by 
JSPS Grant-in-Aid for Scientific Research (B), No.15H03631, 
JSPS Grant-in-Aid for Challenging Exploratory Research, No.26610030.

\end{document}